\newtheorem{main}{Theorem}
\newtheorem*{conjecture}{Conjecture}
\newtheorem*{define}{Definition}
\newtheorem{theorem}{Theorem}[section]
\newtheorem*{theorem*}{Theorem}
\newtheorem{lemma}[theorem]{Lemma}
\newtheorem*{lemma*}{Lemma}
\newtheorem*{question*}{Question}
\newtheorem*{proposition*}{Proposition}
\newtheorem{definition}[theorem]{Definition}
\newtheorem{remark}[theorem]{Remark}
\definecolor{re}{HTML}{E2322E} 
\definecolor{bl}{HTML}{071344} 
\definecolor{gr}{HTML}{60BE5B} 
\def\E{\mathbb{E}}
\def\LL{\mathcal{L}}
\def\M{\mathcal{M}}
\def\HH{\mathcal{H}}
\def\T{\mathcal{T}}
\newcommand{\abs}[1]{\left\vert#1\right\vert}
\begin{document}
\title{On multicolor Ramsey numbers of triple system paths of length 3}

\author{
Tom Bohman\thanks{Department of Mathematical Sciences, Carnegie Mellon University, Pittsburgh, PA 15213, USA. Email: {\tt tbohman@math.cmu.edu}, supported in part by a grant from the Simons Foundation (587088, TB).}
\and 
Emily Zhu\thanks{Department of Mathematics, University of California at San Diego, La Jolla, CA 92093, USA. Email: {\tt e9zhu@ucsd.edu}, supported in part by the National Science Foundation Graduate Research Fellowship Program under Grant No. DGE-2038238.}
}
\date{}

\maketitle

\begin{abstract}
Let $\HH$ be a 3-uniform hypergraph. 
The multicolor Ramsey number $ r_k(\HH)$ is the smallest integer $n$ such that every coloring of $ \binom{[n]}{3}$ with $k$ colors has a monochromatic copy of $\HH$.  
Let $ \LL$ be the loose 3-uniform path with 3 edges and $ \M$ denote the messy 3-uniform path with 3 edges; that is, let $\LL = \{abc, cde, efg\}$ and $\M = \{ abc, bcd, def\}$.  
In this note we prove $ r_k(\LL) < 1.54k$ and $ r_k(\M) < 1.6k$ for $k$ sufficiently large.  
\end{abstract}

\section{Introduction}
Let $r \ge 2$ and consider an $r$-uniform hypergraph $\HH$.  The multicolor Ramsey number $r_k(\HH)$ is the minimum $n$ such that every 
$k$-coloring of $\binom{[n]}{r}$ contains a monochromatic copy of $\HH$.  The problem of determining the asymptotics of $ r_k(\HH)$ is wide
open even for some simple $\HH$.  Consider, for example, the graph triangle $K_3$.  It is known that $ r_k(K_3) $ is at least exponential in $k$ and that the
limit as $k $ tends to infinity of $ r_k(K_3) ^{ 1/k } $ exists.  However, the value of this limit remains an open problem; indeed, it is an old \$250 problem of
Erd\H{o}s to determine this limit and a \$100 problem to just determine whether or not this is finite \cite{chung1998}.

In this work we consider $ r_k ( \HH )$ where $ \HH$ is a 3-edge, 3-uniform path.  There are three such hypergraphs: The tight path $ \T= \{ abc, bcd, cde\}$, the loose path $ \LL = \{abc, cde, efg\}$,
 and the messy path $\M = \{ abc, bcd, def\}$.

\begin{figure}[ht]
\begin{subfigure}{.33\textwidth}
\centering
\begin{tikzpicture}[scale=.7]
\SetVertexSimple
\renewcommand*{\VertexInnerSep}{0pt}
\renewcommand*{\VertexSmallMinSize}{3pt}
\Vertices[dir=\EA]{line}{a,b,c,d,e}
\draw[rounded corners=6,color=re,thick] (1,0) ellipse (1.4 and .3);
\draw[rounded corners=6,color=gr,thick] (2,0) ellipse (1.4 and .3);
\draw[rounded corners=6,color=bl,thick] (3,0) ellipse (1.4 and .3);
\end{tikzpicture}
\caption{Tight Path}
\end{subfigure}\begin{subfigure}{.33\textwidth}
\centering
\begin{tikzpicture}[scale=.7]
\SetVertexSimple
\renewcommand*{\VertexInnerSep}{0pt}
\renewcommand*{\VertexSmallMinSize}{3pt}
\Vertices[dir=\EA]{line}{a,b,c,d,e,f,g}
\draw[rounded corners=6,color=re,thick] (1,0) ellipse (1.4 and .3);
\draw[rounded corners=6,color=gr,thick] (3,0) ellipse (1.4 and .3);
\draw[rounded corners=6,color=bl,thick] (5,0) ellipse (1.4 and .3);
\end{tikzpicture}
\caption{Loose Path}
\end{subfigure}\begin{subfigure}{.33\textwidth}
\centering
\begin{tikzpicture}[scale=.7]
\SetVertexSimple
\renewcommand*{\VertexInnerSep}{0pt}
\renewcommand*{\VertexSmallMinSize}{3pt}
\Vertices[dir=\EA]{line}{a,b,c,d,e,f}
\draw[rounded corners=6,color=re,thick] (1,0) ellipse (1.4 and .3);
\draw[rounded corners=6,color=gr,thick] (2,0) ellipse (1.4 and .3);
\draw[rounded corners=6,color=bl,thick] (4,0) ellipse (1.4 and .3);
\end{tikzpicture}
\caption{Messy Path}
\end{subfigure}
\end{figure}

The tight path was studied in \cite{axen2014}, where it is shown that $2k(1-o(1)) \leq r_k(\T) \leq 2k+3$.  The tight path is somewhat different from $\LL$ and $\M$ as
the tight path has a transversal vertex, i.e., a vertex contained in every edge.  Thus, the problem of determining $ r_k( \T)$ is related to the problem of determining the multicolor Ramsey number of the graph path with 4 vertices, $P_4$. It is known that $2k \leq r_k(P_4) \leq 2k + 2$ \cite{irv1974}. For further results on the multicolor Ramsey numbers of longer graph paths, see \cite{sar2016,davies2017,knierim2019}.

The best known lower bounds on $ r_k( \LL)$ and $ r_k(\M)$ are
\[ r_k( \LL) \ge k+6  \ \ \ \text{ and } \ \ \ r_k(\M) \ge k +5. \]
The constructions that provide these lower bounds have a common structure.  Let $n$ be one less than the bound we are establishing (so $n = k+5$ for the loose path and $n = k+4$ for the messy path).
We begin by ordering the vertex set $ V = \{ v_1, v_2, \dots , v_n \}$.  A triple $ v_x v_y v_z$ with $ x < y < z $ and $ x < k $ is assigned color $ x$.  The remaining triples are assigned color $k$.  The
first $k-1$ colors give stars and therefore do not contain copies of either path.  The final color is assigned to a complete subhypergraph, but the number of vertices is one fewer than the number of vertices 
in the path in question.  It is believed that these lower bounds give the actual multicolor Ramsey numbers for these hypergraphs \cite{polcyn2017r10, luczak2016, luczak2018}.

In this work, we provide improvements on the upper
bounds on these multicolor Ramsey numbers.  The previous best known 
result for the loose path was $ r_k( \LL) < 1.975k + 7 \sqrt{k} $, which was established by \L uczak and Polcyn \cite{luczak2018}.  We are not
aware of any discussion of the multicolor Ramsey number of the messy path in literature.  Our main results are as follows:
\begin{main}
\label{main:loose}
If $k$ is sufficiently large then
\[ r_k( \LL) < 1.531 k.\]
\end{main}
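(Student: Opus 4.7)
The plan is to prove the contrapositive: for $k$ sufficiently large and any $k$-edge-coloring of $\binom{[n]}{3}$ with $n \ge \lceil 1.546k\rceil$, some color class contains a copy of $\mathcal{L}$. The high-level strategy is to combine the Tur\'{a}n-type extremal bound $\mathrm{ex}(n,\mathcal{L}) = \binom{n-1}{2}$ (whose extremal example is the star) with a quantitative stability analysis of $\mathcal{L}$-free $3$-uniform hypergraphs, followed by a double count of triples missing every identified ``center''.

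First I would establish a structural theorem of the following form: there exist small constants $\tau,\varepsilon>0$ such that every $\mathcal{L}$-free $3$-uniform hypergraph $H$ on $n$ vertices with $|H|\ge \tau n^2$ admits a vertex $v$ with $|H|-d_v(H)\le \varepsilon n^2$ (possibly after accounting for bounded exceptional configurations, e.g.\ small complete subhypergraphs on $\le 6$ vertices, which are themselves $\mathcal{L}$-free). This is the heart of the argument. One will need to rule out or carefully control alternative dense $\mathcal{L}$-free structures, notably disjoint unions of two or more stars (whose edge sets are pairwise disjoint, hence $\mathcal{L}$-free).

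Given such a theorem, partition the colors into $A:=\{i:|H_i|\ge \tau n^2\}$ and $B:=\{i:|H_i|<\tau n^2\}$. For each $i\in A$, attach a center $v_i$ via the stability statement and let $V^{\ast}:=\{v_i:i\in A\}$, so $s:=|V^{\ast}|\le |A|\le k$. Now double count the triples disjoint from $V^{\ast}$: there are $\binom{n-s}{3}$ such triples, and each is either colored by a sparse color (contributing a total of at most $|B|\tau n^2\le k\tau n^2$) or by a non-central edge of a star-like color (contributing a total of at most $|A|\varepsilon n^2\le k\varepsilon n^2$). Hence
\[
\binom{n-s}{3}\;\le\; k(\tau+\varepsilon)n^2.
\]
For $n=\lceil 1.546k\rceil$ and $s\le k$, the left-hand side is at least $(0.546)^3 k^3/6\approx 0.0271\,k^3$ while the right-hand side is at most $(1.546)^2(\tau+\varepsilon)k^3\approx 2.39(\tau+\varepsilon)k^3$, so a contradiction follows provided $\tau+\varepsilon<0.0114$.

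The main obstacle lies in the stability step: making $\tau$ and $\varepsilon$ simultaneously small. Classical stability is effective when $|H|$ is close to $\binom{n-1}{2}$, but the ``average'' color class here has only $\Theta(n^2/k)$ edges, which is far from extremal. Handling these intermediate-density classes, as well as non-star $\mathcal{L}$-free constructions such as disjoint double-stars, will likely require an explicit structural analysis rather than a black-box stability result; in particular, one may have to refine the double count by tracking a set of ``effective centers'' per color rather than a single $v_i$. Optimizing this refined inequality against the desired threshold is where the specific constant $1.546$ should appear.
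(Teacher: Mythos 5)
There is a genuine gap, and it sits exactly where you flag the "main obstacle": the single-center stability statement you need is simply false at the relevant densities, and the counterexamples are not bounded exceptional configurations. A vertex-disjoint union of stars is $\LL$-free (a copy of $\LL$ is connected, so it would lie in one star, but its first and third edges are disjoint while every edge of a star contains the center). Two disjoint stars on $n/2$ vertices each give an $\LL$-free hypergraph with about $n^2/4$ edges in which every vertex $v$ satisfies $|H|-d_v(H)\ge n^2/8$, so no choice of $\tau\le 1/4$ and $\varepsilon<1/8$ can work -- let alone $\tau+\varepsilon<0.0114$. Worse, this is the generic shape of an $\LL$-free hypergraph: the paper's structure theorem (Theorem~\ref{thm:loosechar}) shows that after passing to a 22-core, an $\LL$-free color class is essentially a disjoint union of up to $\Theta(n)$ stars plus a perfect matching of ``locked pairs.'' Your proposed repair -- tracking a set of effective centers per color -- then destroys the double count: with up to $n/23$ centers per color and $k=\Theta(n)$ colors, the union $V^{\ast}$ of all centers can be all of $[n]$, making $\binom{n-s}{3}$ vacuous. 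Also, no optimization of a $\tau+\varepsilon$ trade-off will produce the constant $1.546$; that number is $1/(1-0.35312)$, where $0.35312$ is the best known bound toward the Caccetta--H\"aggkvist conjecture for directed triangles.

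The paper's actual argument accepts the many-stars structure and exploits it differently. From the structure theorem it builds a colored multidigraph on $[n]$ with an arc $(u,v)$ in color $c$ whenever $u$ lies in the body of a star centered at $v$ in color $c$ (locked pairs get arcs both ways). Since a vertex lies in at most one star per color, out-degrees are at most $k$; only $O(k)$ pairs are uncovered; so in the oriented graph of one-way arcs almost every in-degree is at least $n-k-o(k)$. If $n\ge(1+\varepsilon)k/(1-\alpha)$ with $\alpha$ ``directed triangle sufficient,'' this in-degree exceeds $\alpha n$ robustly, and the digraph removal lemma yields $\Omega(n^3)$ directed triangles; but each directed triangle must be a stray (non-core) triple in its own color, of which there are only $O(nk)$ -- a contradiction. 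If you want to pursue your line, the lesson is that the right object to count is not ``triples missing all centers'' but a configuration whose orientation structure is incompatible with lying inside any single star of its own color.
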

\begin{main}
\label{main:messy}
If $ \varepsilon >0$ and $k$ is sufficiently large then
\[ r_k(\M) < \left(\frac{ 10 + \sqrt{19}}{9} + \varepsilon \right)k < (1.596+\varepsilon)k.\]
\end{main}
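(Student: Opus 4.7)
The plan is to argue by contradiction. Suppose $n = \lceil (c + \varepsilon) k \rceil$ with $c = (10+\sqrt{19})/9$ admits a $k$-coloring of $\binom{[n]}{3}$ with no monochromatic copy of $\M$, and write $H_i$ for the $i$-th color class.

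The structural fact I would lean on is the following \emph{extension lemma} for an $\M$-free hypergraph $H$: if $\{x, y\}$ is a pair with $t := |N_H(x, y)| \ge 2$, then for each $z \in N_H(x, y)$ every edge of the graph link $L_H(z)$ meets the set $\{x, y\} \cup N_H(x, y) \setminus \{z\}$, which has size $t + 1$. Indeed, any edge $\{p, q\} \in L_H(z)$ whose endpoints both lie outside this set, together with $xyz$ and any $xyw$ for $w \in N_H(x, y) \setminus \{z\}$, forms a copy of $\M$. Thus a single pair of codegree at least $2$ in color $i$ simultaneously traps the link of every one of its neighbors in $H_i$ inside a small vertex set.

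The plan is then to fix a threshold $\alpha \in (0, 1)$ and call color $i$ \emph{heavy} if some vertex $v_i$ has $d_i(v_i) \ge \alpha\binom{n-1}{2}$, and \emph{sparse} otherwise. Sparse colors satisfy $|H_i| \le \alpha \binom{n}{3}$ by averaging over degrees. For a heavy color I would use the extension lemma to show that $H_i$ is essentially a star at $v_i$: any non-star edge $xyz$ of $H_i$ shares the pair $\{x, y\}$ with the star-edge $v_i xy$ whenever the latter lies in $H_i$, and the resulting restriction on $L_{H_i}(v_i)$ forces $d_i(v_i)$ to drop far below $\binom{n-1}{2}$ unless the non-star edges are few; so $|H_i|$ is pushed close to $\binom{n-1}{2}$ modulo lower-order terms. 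Since the links at a shared center are disjoint across colors, at most $1/\alpha$ heavy colors can share any center, bounding the number of heavy colors by $n/\alpha$.

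Plugging these bounds into $\binom{n}{3} = \sum_i |H_i|$ and optimizing over $\alpha$ should collapse the resulting inequality to $9c^2 - 20c + 9 \le o(1)$, whose positive root is exactly $c = (10+\sqrt{19})/9$, contradicting $n \ge (c + \varepsilon)k$ for large $k$. The main obstacle is securing a tight enough structural bound for heavy colors: the naive estimate $|H_i| \le \binom{n-1}{2}$ alone yields only the much weaker $r_k(\M) \le 3k$. Reaching the stated constant requires quantifying carefully the tradeoff between the near-star part of each heavy color and the ``excess'' non-star edges (which the extension lemma forces to cluster around a few vertices), as well as the way heavy colors with overlapping centers can coexist. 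Calibrating the weighting to saturate at the extremal construction of many partially overlapping stars with a small dense residue is where the quadratic should emerge, and I expect this calibration to be the most delicate step of the argument.
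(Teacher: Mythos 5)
Your extension lemma is correct and is essentially the same kind of ``forbidden configuration'' observation that drives the paper's structural analysis: two edges on a common pair plus a third edge hanging off one of them in the right way forces a copy of $\M$. But the global counting scheme you build on top of it does not close, and the gap is not a matter of ``calibration.'' First, the sparse-color bound is useless: a color with maximum degree below $\alpha\binom{n-1}{2}$ has at most $\alpha\binom{n}{3}$ edges, and summing this over up to $k$ sparse colors gives $\alpha k \binom{n}{3} \gg \binom{n}{3}$ for any constant $\alpha$, so no contradiction with $\sum_i |H_i| = \binom{n}{3}$ can emerge from that side. Second, the bound of $n/\alpha$ on the number of heavy colors is weaker than the trivial bound $k$ whenever $\alpha \le n/k$, i.e.\ always (since $\alpha<1<n/k$), so it buys nothing either. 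Even granting the strongest stability statement for heavy colors (each is exactly a star), the naive count $\binom{n}{3}\le k\binom{n-1}{2}$ only yields $n\le 3k+O(1)$; the entire difficulty of the problem is in showing that $k$ near-extremal color classes cannot \emph{jointly} cover $\binom{[n]}{3}$, which requires analyzing how the non-star residue of each color (a partial Steiner triple system plus a union of several disjoint stars) interacts across colors. Your proposal asserts that the quadratic $9c^2-20c+9$ ``should emerge'' from this interaction but supplies no mechanism for it.

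For comparison, the paper first proves that the $13$-core of an $\M$-free hypergraph decomposes into a partial Steiner triple system on $X\cup Y$ together with vertex-disjoint stars with bodies $A_y\subseteq Z$, and then encodes a monochromatic-$\M$-free coloring as a colored multidigraph in which $(u,v,c)$ is an arc when $u\in A_{v,c}$. The constant $(10+\sqrt{19})/9$ comes from playing two independent bounds against each other --- one of the form $n\le \tfrac{4}{3}k+\bar t+\varepsilon k$ in terms of the density $\bar t$ of two-cycle pairs, and one of the form $n\le \tfrac{3}{2}k+\tfrac{\hat p}{2}+\varepsilon k$ in terms of the density $\hat p$ of parallel pairs --- subject to the global arc count $P+T\le nk-S-\binom{n}{2}+O(k)$. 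Your proposal contains no analogue of these quantities or of the trade-off between them, so the key ideas needed to reach the stated constant are missing.
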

The proofs of these Theorems are similar, and each has two parts.  Let $\HH $ be $ \LL$ or $\M$.  The first part of the proof is a structural characterization of $\HH$-free hypergraphs; in particular,
we show that an appropriately chosen core of an $\HH$-free hypergraph has a well-organized structure. Such a characterization was first provided 
by \L uczak and Polcyn for the
loose path. See Lemma~19 in \cite{luczak2017paths}.  In the second part of the proof we consider a $k$-coloring of $ \binom{[n]}{3} $ which does not contain a monochromatic copy of $\HH$.  Based on
the structural characterization, we introduce a digraph on vertex set $ [n]$ in each color.  We then proceed to analyze the structure of this colored collection of 
digraphs to produce the bound on the Ramsey number.  

In the case of the loose path, this structural analysis invokes both the Caccetta-H\"aggkvist Conjecture and the 
Triangle Removal Lemma for digraphs.
\begin{conjecture}[Caccetta-H\"aggkvist \cite{caccetta1978}] If $D$ is a digraph on $n$ vertices with no parallel arcs and minimum in-degree at least $r$
then $D$ has a directed cycle with length at most $ \lceil n/r \rceil $.
\end{conjecture}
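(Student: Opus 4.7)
The displayed statement is the Caccetta--H\"aggkvist conjecture itself, open since 1978 and widely regarded as one of the central unsolved problems in extremal digraph theory. Even the flagship special case $r=\lceil n/3\rceil$ (a directed triangle in every digraph with minimum in-degree at least $n/3$) is not known. I will sketch the plan that I would nonetheless undertake, while being explicit about the point where the plan stalls --- that point is exactly the open content of the conjecture.

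\textbf{Plan of attack.} By arc deletion I may assume $D$ is $r$-in-regular, and by reversing arcs the hypothesis is equivalent to minimum out-degree $r$. The plan is to assume for contradiction that the girth satisfies $g > \lceil n/r\rceil$ and derive an upper bound on $n$. I would first dispose of the large-$r$ range $r \ge n/2$, where any out-neighbor of a vertex $v$ shares an in-neighbor with $v$ and forces a short cycle. For the main range, I would run the breadth-first expansion from a fixed vertex $v$: if $g>j$ then the $j$-th out-ball has size at least $1+r+r(r-1)+\cdots+r(r-1)^{j-1}$, because each newly reached level contributes at least $r-1$ unseen vertices (otherwise we close a cycle of length $\le j$ through $v$). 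Taking $j = \lceil n/r\rceil$ and comparing with $n$ would, if the naive inequality were tight, yield the conjecture outright. To sharpen the error I would then layer in a Shen-type density-increment argument: pass to a subdigraph of maximum girth with slightly higher out-degree ratio, iterate, and bound how many times this can happen. For the triangle case I would additionally run the flag-algebra machinery of Hladk\'y, Kr\'al', and Norin and its successors, which pushes the triangle threshold down to roughly $0.3465\,n$ in out-degree.

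\textbf{Where the plan breaks.} The conjecture asks that the additive error in $g \le \lceil n/r\rceil + C$ be exactly $C=0$, and this is precisely the content that no known method supplies. Every available technique --- BFS counting, density increment, spectral bounds, flag algebras, entropy compression --- leaks a positive constant exactly at the Kautz digraphs and their balanced blow-ups, which are conjecturally the only extremal examples. A proof along my plan would require a stability theorem of the form ``any digraph within $o(n)$ arcs of the extremal density is essentially a Kautz blow-up,'' followed by a direct girth calculation on such blow-ups, and sharpened all the way down to the exact threshold rather than any $(1+o(1))$ relaxation of it. Producing that sharp stability statement is the main obstacle; it is the obstacle that has kept the conjecture open for four decades, and I do not have a way over it. The honest deliverable of this plan is therefore $g \le \lceil n/r\rceil + O(1)$, not the conjectured bound exactly as written.
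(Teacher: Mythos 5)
The statement you were asked to prove is the Caccetta--H\"aggkvist conjecture itself, and the paper does not prove it: it is stated as a \emph{conjecture} with a citation to the 1978 original, and the authors explicitly note that even the special case $r = n/3$ remains open. There is therefore no ``paper's own proof'' to compare against. The paper's actual move is to sidestep the conjecture entirely: it introduces the notion of a constant $\alpha \in [1/3, 2/5)$ being \emph{directed triangle sufficient}, proves the conditional Theorem 3 (that $r_k(\LL) < \frac{1+\varepsilon}{1-\alpha}k$ for any such $\alpha$), and then obtains the unconditional Theorem 1 by plugging in the best known value $\alpha = 0.35312$ from the literature rather than the conjectured $\alpha = 1/3$. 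Your proposal correctly identifies that the statement is open, does not claim to have proved it, and accurately locates the obstruction (every known method loses an additive constant, so the honest deliverable is $g \le \lceil n/r\rceil + O(1)$ in the spirit of Chv\'atal--Szemer\'edi and Shen, not the exact bound). That assessment is consistent with how the paper treats the statement.

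One small technical caveat on your sketch, should you ever write it up as a partial result: the BFS expansion bound $1 + r + r(r-1) + \cdots$ is not quite what girth $> j$ buys you, since distinct vertices at level $i$ can share out-neighbors at level $i+1$ without closing a short cycle through the root $v$; the correct elementary statement is the Chv\'atal--Szemer\'edi counting argument giving girth at most $2n/(r+1)$, refined to $n/r + 2500$ and later $n/r + 73$. But since neither you nor the paper claims a proof of the conjecture, the only actionable point is this: if you want to \emph{use} the $r = n/3$ case downstream (as the paper does), state your result conditionally on a directed-triangle-sufficiency hypothesis, exactly as the paper's Theorem 3 does, rather than attempting the conjecture itself.
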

\noindent We make use of the $ r = n/3$ case of this Conjecture.  As even this special case is still open, we introduce
the following definition.
\begin{define}
We say that a constant $ \alpha \in [1/3,2/5) $ is directed triangle sufficient if every oriented graph $D$ with minimum in-degree at least $ \alpha |V(D)| $ has a directed cycle of length 3.
\end{define}
Of course, the statement that $ 1/3$ is directed triangle sufficient is a special case of the Caccetta-H\"aggkvist Conjecture.  The
best known result for this special case is that $  0.3465 $ is directed triangle sufficient \cite{hladky2017}.  Thus, Theorem~\ref{main:loose} is a Corollary of the following Theorem.
\begin{main}
\label{main:loose2}
If $\alpha$ is directed triangle sufficient, $ \varepsilon >0$ and $ k$ is sufficiently large then
\[ r_k( \LL) < \frac{ 1 + \varepsilon}{ 1 - \alpha} k. \]
\end{main}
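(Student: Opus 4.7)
The plan is to follow the two-stage approach the authors describe. In the first stage I would prove a structural lemma for $\LL$-free 3-uniform hypergraphs: namely, that if $H$ is $\LL$-free and reasonably dense, then after discarding a small exceptional sub-hypergraph the remaining edges organize themselves tightly around a handful of distinguished vertices --- for instance, nearly every remaining edge contains some fixed vertex, or two of its three vertices lie in a small distinguished set. The starting point is the local constraint that a tight 2-path $\{abc, cde\}$ admits no disjoint extension at either end (which would complete a copy of $\LL$), and the proof would proceed by iteratively peeling off low-degree vertices and case-analyzing what shapes a dense $\LL$-free hypergraph can take. An analogous characterization appears in \cite{luczak2017paths}, which I would use as a template.

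In the second stage, fix a $k$-coloring of $\binom{[n]}{3}$ with no monochromatic $\LL$ and, for each color $c$, apply the structural lemma to the color class. I would then encode the resulting structure as a digraph $D_c$ on $[n]$: an arc $(u,v) \in D_c$ records that $v$ is in a ``central'' position for color $c$ relative to $u$ --- for example, that many color-$c$ edges contain the pair $\{u,v\}$ with $v$ playing the role of the connecting vertex in a tight 2-path. The construction is engineered so that a directed 3-cycle $u \to v \to w \to u$ in $D_c$ yields three color-$c$ edges arranged as a loose 3-path. Because a few sporadic arcs may arise from exceptional edges that do not participate in a genuine $\LL$, I would apply the Triangle Removal Lemma for digraphs to delete $o(n^2)$ arcs from $D_c$, producing a directed-triangle-free subgraph $D_c'$. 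Since $\alpha$ is directed triangle sufficient, $D_c'$ contains a vertex $v_c$ of in-degree at most $\alpha n$, hence of in-degree at most $\alpha n + o(n)$ in $D_c$.

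The final step is a double count. The existence of $v_c$ with low in-degree in $D_c$ translates, via the structural lemma, into a set of at least $(1-\alpha)n - o(n)$ vertices $u$ for which the pair $\{u, v_c\}$ fails to sit in the structural core of color $c$, and so must instead belong to some other color. Summing these ``deficits'' over colors and reversing the order of summation gives a lower bound of roughly $k(1-\alpha) n$, while each vertex can contribute at most $(1+o(1)) n$; rearranging yields $(1-\alpha) n \le (1+\varepsilon) k$ for $k$ sufficiently large.

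The delicate part is the first stage. The structural lemma has to be sharp enough that a directed 3-cycle in $D_c$ really does produce an $\LL$ (so that Stage 2 is sound), and clean enough that the final counting extracts exactly the factor $1/(1-\alpha)$ rather than something larger. The Triangle Removal Lemma is what allows the structural lemma to be only approximate, absorbing the $o(n^2)$ arcs that lie outside the structural core; pinning down the right approximate statement and interfacing it with the Caccetta-H\"aggkvist-style deduction is where I expect the real work to be.
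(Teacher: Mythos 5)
Your Stage 1 is essentially right in spirit (the paper's Theorem~2.1 shows the 22-core of an $\LL$-free hypergraph decomposes into disjoint stars $A_y$ centered at vertices $y\in Y$ plus ``locked pairs'' in $X$), and you have correctly identified the three external tools. But Stage 2 is assembled backwards, and the final count does not close. The paper builds a \emph{single} digraph $D$ on $[n]$ aggregating all colors: $u\to v$ when $u$ lies in the body of the color-$c$ star centered at $v$ for some $c$, keeping only pairs with arcs in one direction. The crucial point is that each vertex has at most one out-arc \emph{per color}, so $m^+(v)\le k$ summed over all colors, and since all but $O(k)$ pairs carry some arc, almost every vertex has in-degree at least $n-k-o(n)$ in $D$. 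If $n\ge\frac{1+\varepsilon}{1-\alpha}k$ this exceeds $\alpha n$ robustly (it survives deletion of $\delta n^2$ arcs after discarding a small bad vertex set), so by directed-triangle-sufficiency plus the removal lemma $D$ contains $\Omega(n^3)$ directed triangles. A directed triangle does \emph{not} yield a monochromatic $\LL$, as you claim; rather, whatever color the triple $xyz$ receives, the one-directional arcs rule out its being a locked-pair triple or a star triple of that color, so it must be one of the $O(nk)$ stray triples removed in forming the core --- and $O(nk)=o(n^3)$ gives the contradiction.

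Your version works per color, makes $D_c$ triangle-free by removal, and extracts one low-in-degree vertex $v_c$ per color. This loses too much in two ways. First, within a single color the star centers $Y_c$ and star bodies $Z_c$ are disjoint, so the per-color arc digraph has no directed $2$-paths at all: it is trivially triangle-free and only the $|Y_c|$ centers have positive in-degree, so the one vertex $v_c$ you extract carries almost no information. Second, the concluding double count does not produce the claimed inequality: one vertex per color with $(1-\alpha)n$ ``deficit'' neighbors gives roughly $k(1-\alpha)n$ incidences, while the natural upper bound on total coverage (out-degree at most $k$ per vertex) is $nk$; since $k(1-\alpha)n\le nk$ holds trivially, no constraint on $n$ emerges. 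The factor $\frac{1}{1-\alpha}$ comes precisely from comparing the in-degree lower bound $n-k$ at \emph{almost every} vertex of the global digraph against the threshold $\alpha n$, not from aggregating one exceptional vertex per color. (A smaller fixable issue: after removal, the particular vertex of in-degree $\le\alpha n$ in $D_c'$ need not have in-degree $\alpha n+o(n)$ in $D_c$, since all of its in-arcs could have been among those deleted.)
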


Note that if the special case of the Caccetta-H\"aggkvist holds then $ 1/3$ is directed triangle sufficient and we would
achieve the bound $ r_k( \LL) < (3/2 + \varepsilon)n$.

The organization of this paper is as follows. In the rest of this section we introduce definitions and notation for hypergraphs and graphs.  In Section~2, we study the loose path, providing a self-contained proof for a characterization of loose path-free hypergraphs which we then use to prove Theorem~\ref{main:loose2}.
In Section~3, we study the messy path,  establishing a characterization of messy path-free hypergraphs, and proving Theorem~\ref{main:messy}.  Section 4 gives the exact extremal number for the messy path, a result that may be of independent interest.

\subsection{Definitions and notation}
We adopt the convention of identifying a hypergraph $ \HH$ with the edge set of $ \HH$.  We let $ V= V(\HH)$ denote the 
vertex set of a hypergraph $\HH$.   A hypergraph $\HH$ on vertex set $V$ is $r$-uniform if $\HH \subseteq \binom{V}{r}$ where $\binom{V}{r}$ denotes all subsets of $V$ of size $r$. For convenience, we may denote an edge $\{v_1,\dots,v_r\} \in \HH$ by $v_1v_2\dots v_r$.  All hypergraphs considered in this work are $3$-uniform.

The multicolor Ramsey number for a hypergraph is closely linked to its extremal number. We define $ex^{(r)}(n,\HH)$, the {\em extremal number} of $\HH$, to be the maximum number of edges in any $\HH$-free $r$-uniform hypergraph on $n$ vertices. Analogously, we define $Ex^{(r)}(n,\HH)$ to be the {\em extremal family} of $\HH$.   This is the set of 
$\HH$-free $r$-uniform hypergraphs on $n$ vertices and $ex^{(r)}(n,\HH)$ edges. In this work we consider only the case $r=3$, and so we will simply write $ex(n,\HH)$ and $Ex(n,\HH)$ respectively.

Our analysis uses the following concepts. We define the {\em trace} (sometimes known as a link) of some vertex or set of vertices as 
\[Tr(x_1,\dots,x_k) := \{e \setminus \{x_1,\dots,x_k\} \mid e\in \HH, \{x_1,\dots,x_k\} \subseteq e\}.\] The degree of a vertex or set of vertices is then simply $deg(x_1,\dots,x_k) := \abs{Tr(x_1,\dots,x_k)}$. For a 3-uniform hypergraph, we will often refer to $deg(x,y)$ as the codegree of the pair $x,y$. We define the {\em $m$-core} of a hypergraph to be the subhypergraph formed by iteratively removing vertices of degree less than $m$ until every vertex has degree at least $m$ (or the hypergraph is empty).

We also define notation for subhypergraphs. For a hypergraph $\HH$ and $U \subseteq V(\HH)$, we will denote the subhypergraph induced by $U$ by $\HH[U] := \{e \in \HH: e \subseteq U\}$. 
We extend the definition and notation for induced subhypergraphs to graphs and digraphs in the natural way.  

If $G$ is a graph then the {\em matching number} of $G$, denoted $\nu(G)$, is the maximum number of edges in a matching in $G$. Furthermore, the {\em vertex cover number} of $G$, denoted $\tau(G)$, is the minimum number of vertices in a vertex cover of $G$, i.e., a set of vertices which intersects every edge.


\section{Loose Path}
The loose path has been studied extensively: a series of papers examines the extremal number, variations of the extremal number, small cases of the multicolor Ramsey number, and asymptotics of the multicolor Ramsey number. The unique extremal $\LL$-free hypergraph is the complete star when $n$ is at least 8 \cite{jackowska2016turan}.  This implies $k + 6 \leq r_k(\LL) \leq 3k$ for $k \geq 3$ \cite{jackowska2015r3, jackowska2016turan}. 
It is known that $r_k(\LL) = k+6$ for $k \leq 10$ \cite{gyarfas2012loose, jackowska2015r3, jackowska2015r47, polcyn2017r89, polcyn2017r10}.    

The previous best known asymptotic upper bound was 
$r_k( \LL) \leq \lambda k + 7\sqrt{k}$ where $\lambda \approx 1.975$ is a solution to the equation $( \gamma^3 - 3 \gamma^2 + 6\gamma  - 6)^2 - 72\gamma (2 - \gamma)(\gamma - 1)^2=0$ \cite{luczak2018}.
This result was established by first giving a characterization of loose path-free hypergraphs \cite{luczak2017paths} and then 
applying properties of this characterization to each color
in a loose-path free $k$-coloring of $\binom{[n]}{3}$ in order 
to find an upper bound on $n$. 
We use a similar approach, but arrive at a better bound by encoding more information in a digraph for each color class.

The following Section contains a short, self-contained proof of our characterization of loose path-free hypergraphs. We emphasize that this characterization follows from a similar and stronger result due to \L uczak and Polcyn (namely, Lemma 19~in \cite{luczak2017paths}). We include a short proof of the characterization in the interest of completeness. We then apply this
characterization in Section~2.2 to prove Theorem~\ref{main:loose2}.


\subsection{Loose Path-Free Hypergraph Characterization}

\begin{theorem}
\label{thm:loosechar}
If $\HH$ is a loose path-free hypergraph and $\HH'$ is the 22-core of $\HH$ then $\HH'$ has the following structure. The vertex set $V(\HH')$ has a partition into 3 sets $X,Y,Z$, where the set $X$ is partitioned into sets of size 2 and the set $Z$ is partitioned into sets $(A_v : v \in Y)$.  All triples $e$ of the hypergraph $\HH'$ have one of the following two forms:
\begin{itemize}[nosep]
\item $e \cap X$ is one of the pairs in the partition of $X$ and $\abs{e \cap Y} = 1$
\item $e \cap Y = \{y\}$ and $e \setminus \{y\} \subseteq A_y$
\end{itemize}
\end{theorem}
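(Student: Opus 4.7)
The plan is to identify $Y$, $X$, $Z$ via the trace (link) structure of each vertex of $\HH'$, using the fact that $\HH'$ inherits loose path-freeness from $\HH$ and that every vertex has at least $22$ trace edges. The technical heart is a lemma asserting that for each $v \in V(\HH')$, the graph $Tr(v)$ is either a star (all edges sharing a ``center'' vertex $c(v)$) or has the form $M_v \sqcup G_v$, where $M_v$ is a matching and $G_v$ is a graph supported on a vertex set $A_v$ disjoint from $V(M_v)$. To prove this, I would argue by contradiction: assume $Tr(v)$ contains a forbidden configuration, for instance a path $a$-$b$-$c$ together with a further edge $\{d,e\}$ disjoint from $\{a,b,c\}$, or a matching edge $\{a,b\}$ together with an edge $\{b,c\}$ meeting $V(M_v)$. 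Then $\HH'$ contains the three triples $\{v,a,b\}, \{v,b,c\}, \{v,d,e\}$, and using the min-degree condition at one of $a, c, d, e$ (after excluding the bounded number of ``bad'' triples meeting any one of a few forbidden vertices), I can produce a further triple that combines with two of these into a copy of $\LL$, a contradiction. The specific constant $22$ is tuned so that this degree argument always succeeds with room to spare.

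Granted the trace-structure lemma, I would then define $Y$ to be the set of vertices whose trace is not a star. For $v \notin Y$, let $c(v)$ be the unique star center of $Tr(v)$. I would pair $v, w \notin Y$ whenever $c(v) = w$ and $c(w) = v$, call the union of such pairs $X$ (partitioned by the pairing), set $Z = V(\HH') \setminus (X \cup Y)$, and for $y \in Y$ put $A_y = \{z \in Z : c(z) = y\}$. Two consistency checks are required: every vertex in $X$ has a valid partner (so $X$ really is a union of the pairs), and the sets $\{A_y\}_{y \in Y}$ are pairwise disjoint. Both follow by applying the trace-structure lemma at two vertices simultaneously and using one more short $\LL$-free argument to rule out a vertex $z$ appearing in the $G_y$-part of two distinct $Y$-vertices, or appearing in both $G_y$ and $M_{y'}$.

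Finally, I would verify the edge structure. A case analysis on $|e \cap Y|$ for an arbitrary edge $e \in \HH'$ rules out $|e \cap Y| \in \{0, 2, 3\}$: in the case $|e \cap Y| = 0$, the star traces of the three non-$Y$ vertices can be assembled into a copy of $\LL$; in the case $|e \cap Y| \ge 2$, both $Y$-vertices contribute trace edges through the third vertex that easily extend to a copy of $\LL$ using min-degree. Thus $|e \cap Y| = 1$, say $e = \{y, a, b\}$, and $\{a, b\} \in Tr(y) = M_y \sqcup G_y$. If $\{a, b\} \in M_y$ then one shows $a, b \in X$ and $\{a, b\}$ is one of the designated $X$-pairs; if $\{a, b\} \in G_y$ then $a, b \in A_y$. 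The main obstacle throughout will be the trace-structure lemma: extracting from min-degree $\geq 22$ and $\LL$-freeness that the trace cleanly decomposes as a star or as a matching-plus-disjoint-graph requires a careful case analysis in which one must always have enough room to pick an extension triple avoiding a prescribed bounded forbidden set.
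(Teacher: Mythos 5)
Your proposal follows essentially the same route as the paper: a forbidden-configuration argument constrains each trace to be either a star or a union of isolated edges plus ``absorbing'' components, $Y$ is the set of vertices whose trace is not a star (equivalently, by the paper's Lemma~\ref{lem:no23match}, those with trace matching number at least $4$), $X$ is the set of mutually-centered star pairs, and $Z$ collects the remaining star-trace vertices grouped by their centers in $Y$. The one point you must make explicit is that your trace-structure lemma has to record the absorbing property --- every triple of $\HH'$ meeting the $G_v$-part of $Tr_{\HH'}(v)$ must contain $v$ (the paper's Lemmas~\ref{lem:loosecontain} and~\ref{lem:largecontain}) --- since the bare decomposition $M_v \sqcup G_v$ is vacuous as stated, and it is precisely this property that makes the sets $A_y$ pairwise disjoint and forces each $Z$-vertex to have a star trace centered at a vertex of $Y$.
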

\begin{figure}[ht]
	\centering
	\begin{tikzpicture}[scale=1.4]
	\draw (0,-.5) ellipse (1 and .5) node {$Y$};
	\draw (1.2,.25) ellipse (.1 and .2)
		  (1.2,-.25) ellipse (.1 and .2)
		  (1.2,-.75) ellipse (.1 and .2);
	\draw (1.4,.45) -- (1.6,.45) -- (1.6,-1.05) -- (1.4,-1.05);
	\node[font=\small] at (1.8,-.25) (p) {$X$};

	\draw[rounded corners,color=re] (1.3,-1.08) -- (1.33,-.52) -- (.25,-.35) -- cycle;
	\draw[rounded corners,color=re] (1.33,.02) -- (1.33,-.52) -- (.5,-.25) -- cycle;
	\filldraw (1.2,-.65) circle (.03)
			  (1.2,-.85) circle (.03)
			  (1.2,-.15) circle (.03)
			  (1.2,-.35) circle (.03)
			  (.47,-.44) circle (.03)
			  (.7,-.25) circle (.03);
						
	\filldraw (.25,-.18) circle (.03)
			  (-.3,-.3) circle (.03)
			  (-.7,-.25) circle (.03);
						
	\draw[rounded corners,color=bl]
				(-1.05,.3) -- (-.66,-.42) -- (-.6,.5)
				(-.5,.6) -- (-.315,-.49) -- (-0,.45)
				(.1,.65) -- (.23,-.4) -- (.5,.5);
	\draw (-1.1,.6) -- (-1.3,.6) --(-1.3,-.2) -- (-1.1,-.2);
	\node[font=\small] at (-1.5,.2) {$Z$};
				
	\node[font=\footnotesize] at (-.76,.1) (1) {$A_1$};
	\node[font=\footnotesize] at (-.27,.2) (2) {$A_2$};
	\node[font=\footnotesize] at (.28,.25) (3) {$A_3$};
	\end{tikzpicture}
	\caption{$\LL$-free hypergraph}
\end{figure}

When we apply this Theorem below we make use of the following definitions.
\begin{definition}
\label{def:lock}
Each pair in the partition of $X$ is called a locked pair.
\end{definition}
\begin{definition}
\label{eq:stray}
Let $ \HH$ be an $\LL$-free hypergraph and let $\HH'$ be the 22-core of $\HH$.  We call the triples in $ \HH \setminus \HH'$ 
stray triples (or removed triples).
\end{definition}

The remainder of this subsection is a proof of Theorem~\ref{thm:loosechar}.  Throughout the proof we let $\HH$ be a loose path-free hypergraph and $\HH'$ be the 22-core of $\HH$.

\begin{lemma}
\label{lem:no23match}
The matching number $\nu(Tr_{\HH'}(v)) \neq 2,3$ for all vertices $v \in V(\HH')$.
\end{lemma}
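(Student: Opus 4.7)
I argue by contradiction. Suppose that some $v \in V(\HH')$ has $\nu(Tr_{\HH'}(v)) = k \in \{2,3\}$, fix a maximum matching $\{e_1,\ldots,e_k\}$ in $Tr_{\HH'}(v)$ with $e_i = \{c_i, c_i'\}$, and set $C = \bigcup_i e_i$. By maximality $C$ is a vertex cover of $Tr_{\HH'}(v)$. The strategy is to use the forbidden configuration to cover the traces of vertices in $C$, then to exploit \emph{alternative} maximum matchings (obtained by replacing some $e_i$ with $\{c_i, w\}$ for an external neighbor $w$) to sharply restrict codegrees, and finally to deduce that some vertex of $C$ has $\HH'$-degree less than $22$.

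For each $u \in e_i$, the forbidden configuration for a pair $(e_i, e_j)$ implies that every triple $\{u,x,y\} \in \HH'$ with $v \notin \{x,y\}$ satisfies $\{x,y\} \cap ((e_i \setminus \{u\}) \cup e_j) \neq \emptyset$. Intersecting over $j \neq i$: for $k=2$ the set $\{v\} \cup (C \setminus \{u\})$ covers $Tr_{\HH'}(u)$; for $k=3$ each triple through $u$ either contains $v$, contains $u$'s matching partner $u'$, or is a ``Type C'' triple $\{u, c_{j,a}, c_{l,b}\}$ with $\{i,j,l\} = \{1,2,3\}$ (at most $4$ such). Let $N_u = \{w \notin C : \{u,w\} \in Tr_{\HH'}(v)\}$. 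Forbidding a $(k+1)$-matching yields two key structural facts: if $|N_u|, |N_{u'}| \geq 1$ then $N_u = N_{u'}$ is a common singleton (so $|N_u| \geq 2 \Rightarrow |N_{u'}| = 0$); and for each $w \in N_u$ the alternative matching through $\{u, w\}$ is also maximum, and iterating the forbidden-configuration argument through several such alternatives bounds codegrees such as $d_\HH(u, u')$ and $d_\HH$ between distinct matching edges.

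For the $\nu = 2$ contradiction: since $\deg_{\HH'}(v) \geq 22$ and $|C| = 4$, some $u \in C$, say $a_1$, has $d(a_1, v) \geq 6$, so $|N_{a_1}| \geq 3$; thus $|N_{a_2}| = 0$, giving $d(a_2, v) \leq 3$, and the alternative matchings through $N_{a_1}$ force $d_\HH(a_2, a_1), d_\HH(a_2, b_1), d_\HH(a_2, b_2) \leq 3$. Since $\{v, a_1, b_1, b_2\}$ covers $Tr_{\HH'}(a_2)$, we obtain $\deg_{\HH'}(a_2) \leq 12 < 22$, a contradiction. For $\nu = 3$: if some $u \in C$ has $|N_u| \geq 2$ then the partner $u'$ has $|N_{u'}| = 0$ (so $d(u', v) \leq 5$), the Type C triples through $u'$ are forbidden (by alternative matchings through $N_u$), and $d(u, u') \leq 1$, yielding $\deg_{\HH'}(u') \leq 5 < 22$; if every $|N_u| = 0$ then $Tr_{\HH'}(v) \subseteq \binom{C}{2}$, whence $|Tr_{\HH'}(v)| \leq 15 < 22$; otherwise some $|N_u| = 1$, and a direct tally gives $\deg_{\HH'}(u) \leq d(u, v) + d(u, u') - 1 + 4 \leq 6 + 2 - 1 + 4 = 11 < 22$. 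In each sub-case the conclusion contradicts $\deg_{\HH'}(\cdot) \geq 22$.

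The main obstacle is the bookkeeping in the alternative-matching step: one must verify that forbidden configurations arising from distinct choices of external neighbor $w$ yield independent constraints on the third vertex of candidate triples like $\{a_2, b_1, z\}$ or $\{u, u', z\}$, and that the intersection of these constraints leaves only a bounded set of admissible $z$. A secondary care is needed in distinguishing codegrees in $\HH$ (where the third triple of a forbidden configuration lives) from codegrees in $\HH'$ (where the matching lives), using $\HH' \subseteq \HH$ freely when passing between them.
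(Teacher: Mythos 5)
Your proof is correct, but it takes a genuinely different route from the paper's. The paper's argument is a short two-step degree count pivoting on a vertex \emph{outside} the matching: since $\deg_{\HH'}(v)\ge 22>\binom{6}{2}$, some edge $e$ of $Tr_{\HH'}(v)$ has an endpoint $u$ outside the vertex set $\mathcal{M}$ of the maximal matching, and since at most $6+\binom{6}{2}=21$ triples through $u$ either lie in $Tr_{\HH'}(v)$ or have both remaining vertices in $\mathcal{M}$, the bound $\deg_{\HH'}(u)\ge 22$ immediately yields a triple completing the forbidden configuration. You instead stay \emph{inside} the cover $C$: you swap a matching edge $\{u,u'\}$ for $\{u,w\}$ with $w\in N_u$ to get alternative maximum matchings, use these to pin down codegrees between matching vertices, and derive the contradiction from $\deg_{\HH'}(u')$ for a suitable matching vertex (or from $\deg_{\HH'}(v)$ itself when all $N_u$ are empty). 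I checked your key claims --- the cover $\{v\}\cup(C\setminus\{u\})$ of $Tr_{\HH'}(u)$ for $\nu=2$, the Type C classification for $\nu=3$, the bounds $d_{\HH}(a_2,\cdot)\le 3$ and $d_{\HH}(u,u')\le 1$ or $2$ obtained from the swapped matchings, and the final tallies $12$, $6$, $15$, $11$, all below $22$ --- and they hold; the implication $\abs{N_u}\ge 2\Rightarrow N_{u'}=\varnothing$ is correctly applied only to matching partners. The trade-off: your route needs a three-way case split and careful bookkeeping over several alternative matchings, while the paper's single observation (an edge of the trace must stick out of $\mathcal{M}$) short-circuits all of it; on the other hand, your analysis extracts finer structural information about how triples meet $C$, in the spirit of the later lemmas that build the full characterization, and for this lemma alone it would even tolerate a smaller core threshold (you only need degrees exceeding $15$, versus $21$ for the paper's count).
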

\begin{proof}
Let $v$ be a fixed vertex and assume for the sake of contradiction that the matching number of the trace of $v$ in $\HH'$ is 2 or 3. Let $M$ be a maximal matching in $Tr_{\HH'}(v)$ and let $\mathcal{M}$ be the union of the edges in $M$. 
As the vertex set $\mathcal{M}$ contains at most 15 edges, there is an edge $e$ of $Tr_{\HH'}(v)$ that is not contained in $\mathcal{M}$. As $M$ is a maximal matching, $e$ intersects $\mathcal{M}$ in one vertex. Let $u$ be the vertex in $e$ that is not in $\mathcal{M}$. 
As any edge of $Tr_{\HH'}(v)$ that contains $u$ must also intersect $\mathcal{M}$, there are at most 6 such edges (this count includes $e$ itself). As $u$ is in at least $22 >  \binom{6}{2} + 6 $ edges in $\HH'$, there is a triple $uyz \in \HH'$ such that at least one of $y$ and $z$ is not in $\mathcal{M}$ and neither $y$ nor $z$ is $v$. Consider such a triple $uyz \in \HH'$. A loose path appears among two edges of $M\cup\{e\}$ (expanded to triples by including $v$) and $uyz$. 
\end{proof}

We are now ready to identify the first part of the structure defined in Theorem~\ref{thm:loosechar}. We define $Y$ to be the set of vertices $y$ such that the matching number of $Tr_{\HH'}(y)$ is at least 4.
\begin{lemma}
\label{lem:loosecontain}
If $y \in Y$ and a triple $abc \in \HH'$ intersects an edge $e$ of $Tr_{\HH'}(y)$ then either $e \subset abc$ or $y \in abc$.
\end{lemma}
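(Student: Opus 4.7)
I would argue by contradiction. Suppose $|abc \cap e| = 1$ while $y \notin abc$, and write $p = abc \cap e$, so that $e = \{p,q\}$ and $abc = \{p,r,s\}$ with $p, q, r, s$ four distinct vertices, all different from $y$. The goal is to exhibit a copy of $\LL$ inside $\HH'$, contradicting $\LL$-freeness.

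The key observation is the following: if I can produce two disjoint pairs $f_1, f_2 \in Tr_{\HH'}(y)$ satisfying $f_1 \cap abc = \emptyset$ and $|f_2 \cap abc| = 1$, then the three triples $\{y\} \cup f_1$, $\{y\} \cup f_2$, and $abc$ form a copy of $\LL$. Indeed, the first two meet only in $y$ (since $f_1 \cap f_2 = \emptyset$), the second and third meet only in the unique vertex of $f_2 \cap abc$, and the first and third are disjoint (since $y \notin abc$ and $f_1 \cap abc = \emptyset$); distinctness of the three triples is immediate. Since $y \in Y$, there is a matching $M \subseteq Tr_{\HH'}(y)$ with $|M| \geq 4$, and the plan is to use $M$ to produce $f_1, f_2$.

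I would split into two cases according to whether the 4-vertex set $U := e \cup abc = \{p, q, r, s\}$ is a vertex cover of $M$. If $U$ is not a vertex cover, pick any $m \in M$ disjoint from $U$ and set $f_1 = m$, $f_2 = e$; then $m \cap e \subseteq m \cap U = \emptyset$ and $e \cap abc = \{p\}$, as required. If $U$ does cover $M$, then since each vertex of $U$ lies in at most one matching edge, we must have $|M| = 4$ together with a bijection $v \mapsto m_v$ between $U$ and $M$ satisfying $m_v \cap U = \{v\}$. Setting $f_1 = m_q$ and $f_2 = m_p$, the partner of $q$ in $m_q$ cannot equal $p$, $r$, or $s$ (each of these is already pinned to a different matching edge), so $m_q \cap abc = \emptyset$; likewise the partner of $p$ in $m_p$ is not $r$ or $s$, so $m_p \cap abc = \{p\}$. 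In either case the key observation yields a copy of $\LL$ in $\HH'$, contradicting $\LL$-freeness.

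The main bookkeeping point is the second case: one has to verify that the partner vertices in $m_p$ and $m_q$ do not sneak into $abc$. This falls out immediately from the matching property combined with the bijection, and beyond that the argument uses only the defining hypothesis $\nu(Tr_{\HH'}(y)) \geq 4$; the 22-core hypothesis is needed only via Lemma~\ref{lem:no23match} to ensure this matching number.
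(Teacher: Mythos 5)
Your proof is correct and follows essentially the same strategy as the paper's: exhibit two disjoint edges of $Tr_{\HH'}(y)$, one meeting $abc$ in exactly one vertex and one disjoint from $abc$, using the matching of size at least $4$ guaranteed by $y \in Y$, and thereby produce a copy of $\LL$ (the paper's ``forbidden configuration''). Your case analysis on whether $e \cup abc$ covers the matching is in fact a bit more careful than the paper's one-line argument, which tacitly assumes one can find a matching edge disjoint from both $e$ and $abc$; your second case supplies the substitute pair $m_p, m_q$ when no such edge exists.
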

\begin{proof}
Suppose $y \notin abc$ and $\abs{e \cap abc} = 1$. Let $M$ be a maximum
matching of $Tr_{\HH'}(y)$.  As this matching has at least 4 edges, the edge set $M \cup \{e\}$ contains an edge that intersects $abc$ in 1 vertex and an edge that does not intersect $abc$ and is disjoint from the first edge.  These two edges (expanded to triples by including $y$) and $abc$ form a loose path.
\end{proof}
\noindent We now note that Lemma~\ref{lem:loosecontain} implies that 
no triple in $ \HH'$ contains more than one vertex of $Y$. Indeed, 
assume for the sake of contradiction that $y_1y_2a \in \HH'$ where $y_1,y_2 \in Y$.  Consider a triple that contains $y_1$.  Such a triple intersects the edge $ y_1a$ of $ Tr_{\HH'}(y_2)$. It follows that either the triple contains $y_2$ or the triple contains $a$.  This then implies that the 
matching number of $ Tr_{\HH'}(y_1)$ is at most 2, which is a contradiction.

\begin{lemma}
\label{lem:largecontain}
Let $y \in Y$. Every connected component of $Tr_{\HH'}(y)$ has either at most 2 vertices or at least 23 vertices. Furthermore, if $C$ is the vertex set of a component with at least 23 vertices and $abc \in \HH'$ intersects $C$ then $y \in abc$.
\end{lemma}
\begin{proof}
Consider a connected component with at least 3 vertices. Let $u$ be a vertex of this component of maximum degree; note that $deg(u) \geq 2$. The triples of $\HH'$ that contain $u$ either contain $y$ and therefore correspond to edges of $Tr_{\HH'}(y)$ or contain all of the neighbors of $u$ in $Tr_{\HH'}(y)$ (by Lemma~\ref{lem:loosecontain}). 
The latter condition cannot be satisfied if the degree of $u$ in $Tr_{\HH'}(y)$ is greater than 2. On the other hand, if the degree of $u$ in $Tr_{\HH'}(y)$ is 2 then we have at most 3 triples of $\HH'$ that contain $u$, which is a contradiction. We conclude that the degree of $u$ in $Tr_{\HH'}(y)$ is its degree in $\HH'$, which is at least 22, and so there are at least 23 vertices in the component.

To prove the second assertion in the Lemma, assume for the sake of
contradiction that the triple 
$abc \in \HH'$ contains a vertex $u$ of a component $C$ that has at least 23 vertices but does not contain $y$. Then, by repeated application of Lemma~\ref{lem:loosecontain}, we see that $abc$ contains all vertices of $C$. As $3<23$ this is a contradiction.
\end{proof}
\noindent
 We say that for a vertex $y \in Y$, a connected component of $Tr_{\HH'}(y)$ is large if it has at least 23 vertices. 

We are now ready to identify the other parts of the vertex partition set forth in Theorem~\ref{thm:loosechar}.
For each vertex $y \in Y$, let $A_y$ be the union of the vertex sets of the large components of $Tr_{\HH'}(y)$. Set $Z := \bigsqcup_{y \in Y} A_y$. 
As no triple in $\HH'$ contains more than one vertex of $Y$,
the sets $Y$ and $Z$ are disjoint. Set $X = V(\HH') \setminus (Y \cup Z)$.

We have our partition $X,Y,Z$ and the partition of $Z$ into $(A_v: v \in Y)$. We now look to partition $X$. Let $x \in X$.
It follows from Lemma~\ref{lem:no23match} and the definition of $Y$ that $Tr_{\HH'}(x)$ is a star with at least 22 edges. Let $x'$ be the center of this star. Note that $x' \notin Y$: if $x' \in Y$ then $x$ itself would be in a large component of $A_{x'}$ and so $x$ would be in $Z$. 
Furthermore, $x' \notin Z$ as this would imply - by Lemma~\ref{lem:largecontain} - that all at least 22 triples containing $x$ must contain both $x'$ and some fixed element of $Y$ (of course, there is only one such triple). It follows that $x' \in X$.  We conclude that $X$ can be partitioned into a collection of pairs $xx'$ with the property that every triple of $\HH'$ that contains one vertex in such a 
pair also contains the other vertex in the pair. Finally, note that the third vertex in such a triple must be in the set $Y$. Thus, triples intersecting $X$ are as stated.



\subsection{Multicolor Ramsey number for the loose path: Proof of Theorem~\ref{main:loose2}}

In this Section we prove Theorem~\ref{main:loose2}.  The proof invokes  
the Caccetta-H\"aggkvist Conjecture and the Removal Lemma for directed triangles.  We recall the following for reference.
\begin{definition}
We say that a constant $ \alpha \in [1/3,2/5) $ is directed triangle sufficient if every oriented graph $D$ with minimum in-degree at least $ \alpha |V(D)| $ has a directed cycle of length 3.
\end{definition}
\begin{theorem}[Alon, Shapira \cite{alon2004}, Digraph Removal]
For every fixed $\delta, h$, there is a positive constant $c(h,\delta)$ with the following property. If $H$ is a fixed digraph on $h$ vertices and $G$ is a digraph on $n$ vertices, where $n$ is sufficiently large, with the property that upon the removal of at most $\delta n^2$ arcs $G$ still contains a copy of $H$ then $G$ contains at least $c(h,\delta)n^h$ copies of $H$.
\end{theorem}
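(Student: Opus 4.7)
The plan is a direct adaptation of the standard proof of the graph Removal Lemma via Szemer\'edi's Regularity Lemma, lifted to the directed setting. First I would establish a digraph version of regularity: an ordered pair $(V_i,V_j)$ in an equipartition is \emph{$\varepsilon$-regular in direction $i \to j$} if the density of arcs from $V_i$ to $V_j$ is stable within $\varepsilon$ under taking subsets of size at least $\varepsilon|V_i|$ and $\varepsilon|V_j|$. By applying Szemer\'edi's Regularity Lemma to the underlying oriented graph and refining if necessary so that both orientations are controlled on almost all pairs, one obtains: for every $\varepsilon > 0$ there exists $M(\varepsilon)$ such that every $n$-vertex digraph admits an equipartition of its vertex set into $t \le M(\varepsilon)$ parts for which at most $\varepsilon t^2$ ordered pairs fail to be $\varepsilon$-regular in the above sense.

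Fix parameters $\varepsilon \ll \eta \ll \delta, 1/h$ to be determined and apply this lemma to $G$ with parameter $\varepsilon$, producing parts $V_1, \dots, V_t$ with $1/\varepsilon \le t \le M(\varepsilon)$. From $G$ remove three kinds of arcs: arcs lying inside a single part $V_i$; arcs belonging to ordered pairs that are not $\varepsilon$-regular in direction $i \to j$; and arcs belonging to ordered pairs of density strictly less than $\eta$ in direction $i \to j$. The three counts are bounded by $n^2/t$, $\varepsilon n^2$, and $\eta n^2$ respectively, so for $\varepsilon, \eta, 1/t$ sufficiently small relative to $\delta$ the total number of removed arcs is at most $\delta n^2$.

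By hypothesis the cleaned digraph $G'$ still contains a copy of $H$. Every arc of this copy sits in an $\varepsilon$-regular ordered pair of density at least $\eta$, so the reduced digraph $R$ on $[t]$ (with $i \to j$ whenever $(V_i,V_j)$ is $\varepsilon$-regular of density $\ge \eta$ in that direction) contains a homomorphic image of $H$. I then apply a counting lemma for regular pairs adapted to digraphs: order the vertices of $H$ as $h_1,\dots,h_h$ and embed them greedily into the parts $V_{\phi(1)},\dots,V_{\phi(h)}$ prescribed by this homomorphism. At the $s$-th step, restrict to candidate vertices in $V_{\phi(s)}$ whose in- and out-neighborhoods, intersected with the already-embedded vertices, remain at least an $(\eta/2)^{\deg_H(h_s)}$ fraction of the relevant parts. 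Regularity in each relevant direction guarantees that all but at most a $\varepsilon h$ fraction of candidates satisfy this, so once $\varepsilon \ll \eta$ the number of embeddings is at least $c(h,\delta)\, n^h$ for a suitable $c(h,\delta) > 0$.

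The main technical obstacle is the digraph embedding/counting lemma: at each step of the greedy embedding one must simultaneously control intersections of in-neighborhoods and out-neighborhoods with respect to several already-embedded vertices, invoking regularity separately in each relevant orientation. This is routine once one has regularity in both directions, but it is the step that forces $\varepsilon$ and $\eta$ to be chosen small as functions of $h$, and it is where the assumption that $H$ has fixed size is essential --- a number of cleanup rounds depending on $n$ would wipe out the density lower bounds used to produce the $n^h$ copies.
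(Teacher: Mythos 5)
The paper does not prove this statement at all: it is imported verbatim as an external black box from Alon and Shapira \cite{alon2004}, so there is no in-paper proof to compare against. Judged on its own terms, your sketch is correct in outline and in fact follows essentially the same route as the original Alon--Shapira argument: a directed analogue of Szemer\'edi regularity, a cleaning step removing internal, irregular, and sparse ordered pairs, and a greedy embedding/counting lemma. Two points deserve more care than your sketch gives them. First, the directed regularity lemma is not obtained by ``applying Szemer\'edi's Regularity Lemma to the underlying oriented graph and refining'': a digraph may have arcs in both directions between a pair of vertices, and the clean way to get simultaneous control of both directed densities is either to rerun the energy-increment proof with an index summing the squared densities in each direction, or to invoke the regularity lemma for edge-colored complete graphs (coloring each pair by which of the two possible arcs it carries); this also reduces the whole statement to the colored removal lemma. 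Second, after cleaning, the surviving copy of $H$ may place two vertices of $H$ in the same part $V_i$; since within-part arcs were deleted, such vertices are necessarily non-adjacent in $H$, and the embedding lemma must be stated so as to allow repeated target parts. Neither issue is a real obstruction --- both are handled by the standard machinery --- but they are exactly the places where a careless transcription of the undirected proof breaks.
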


Let $ \alpha$ be directed triangle sufficient and let $\varepsilon > 0 $ be a small constant.  Suppose $$ n = \frac{ 1 + \varepsilon }{ 1 - \alpha } k , $$
and assume for the sake of contradiction that there is a loose path-free $k$-coloring of $\binom{[n]}{3}$. Fix such a coloring and let $C$ be the set of colors.  For each color $c \in C$, let $ \HH_c $ be the collection of triples colored with color $c$.  We apply Theorem~\ref{thm:loosechar}.  For each $ \HH_c$, we let $ \HH'_c $ be the 22-core of $ \HH_c$ and we let $ X_c, Y_c, Z_c$ be the
partition of the vertex set given by Theorem~\ref{thm:loosechar}.  Furthermore, for   
each vertex $v$ in the set $Y_c$ let $A_{v,c}$ be the set $A_{v}$ given by Theorem~\ref{thm:loosechar}.  
We define a colored multidigraph $M$ on the vertex set $V = [n]$ as follows. The directed arc $(u,v)$ appears in the multidigraph with color $c$ if $u \in A_{v,c}$.
For a specific color, we denote this arc by $(u,v,c)$. We will also include both $(u,v,c), (v,u,c)$ in the multidigraph if $u,v$ is a locked pair in color $c$ (recall Definition~\ref{def:lock}). 
Our main focus in the proof will be on the pairs of vertices that have arcs of $M$ going in only one direction; this will be most pairs. 

We define the in and out-degrees of the colored multidigraph $M$ as follows:
\[m^-_c(v) = \abs{\{u \in [n]: (u,v,c) \in M\}}, \quad m^-(v) = \sum_{c \in C} m^-_c(v)\]
\[m^+_c(v) = \abs{\{u \in [n]: (v,u,c) \in M\}}, \quad m^+(v) = \sum_{c \in C} m^+_c(v)\]
Note that $m^+(v) \leq k$ as $v$ can appear either in $ A_{y,c}$ for at most one vertex $y$ or in a single locked pair in color $c$.

\begin{lemma}
\label{lem:uncover}
At most $O(k)$ pairs of vertices $\{u,v\}$ have the property that neither $(u,v)$ nor $(v,u)$ appears as an arc in any color in $M$.
\end{lemma}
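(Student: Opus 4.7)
Let $U$ denote the set of uncovered pairs. The plan is to show that each pair in $U$ lies in $\Omega(k)$ \emph{stray triples}---triples killed when extracting the $22$-core of some color---and then finish by a global double count.

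First I will argue that for each uncovered pair $\{u,v\}$ and each color $c$, at most one $w$ satisfies $\{u,v,w\} \in \HH'_c$. By Theorem~\ref{thm:loosechar}, any such triple is either of Form~1 (a locked pair in $X_c$ together with a vertex of $Y_c$) or Form~2 (a center $y\in Y_c$ together with two vertices of $A_{y,c}$). In Form~1, if $\{u,v\}$ were the locked pair then $M$ would contain the arc $(u,v,c)$, contradicting uncoveredness; so the locked pair is $\{u,w\}$ or $\{v,w\}$ and $w$ is the unique locked partner (in color $c$) of whichever of $u,v$ lies in $X_c$. In Form~2, if $u$ or $v$ were the center then the other would lie in its star-body and the corresponding arc would appear in $M$, again contradicting uncoveredness; so $w$ must be the center, and since the sets $A_{y,c}$ partition $Z_c$, this $w$ is determined by $c$. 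Summing over colors, the core codegree of $\{u,v\}$ is at most $k$.

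Since the total codegree of $\{u,v\}$ in $\binom{[n]}{3}$ is $n-2$, at least $n-k-2$ of the triples through $\{u,v\}$ are stray in their color. Under the hypothesis $n=\frac{1+\varepsilon}{1-\alpha}k$ with $\alpha\ge 1/3$,
\[ n-k \;=\; k\cdot\frac{\alpha+\varepsilon}{1-\alpha} \;\ge\; \tfrac{1}{2}k, \]
so every uncovered pair lies in $\Omega(k)$ stray triples.

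To finish I will double-count strays globally. In each color the $22$-core is built by iteratively deleting a vertex of current degree less than $22$, so fewer than $22$ triples die per deleted vertex and hence at most $21n$ strays per color and at most $21nk$ strays in total. Each stray triple is incident to exactly three pairs, so
\[ |U|\,(n-k-2) \;\le\; \sum_{\{u,v\}\in U}\#\{\text{strays through }\{u,v\}\} \;\le\; 3\cdot 21\,nk, \]
giving $|U| \le \frac{63\,nk}{n-k-2}=O(k)$ since $n-k=\Theta(k)$.

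The main difficulty is the codegree bound in the first step. The uncovered hypothesis is used essentially to exclude the two \emph{high-multiplicity} configurations---$\{u,v\}$ being a locked pair in $X_c$, or one of $u,v$ being a star-center containing the other in its body---each of which would otherwise allow arbitrarily many choices of $w$. Without this sharpening to at most $1$ per color, the analogous lower bound on strays per pair would read $n-ck-2$ for some $c>1$, which is negative in the target regime $n<2k$ and hence useless.
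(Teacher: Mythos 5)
Your proof is correct and follows essentially the same route as the paper's: show each uncovered pair has core codegree at most one per color (hence lies in at least $n-k-2$ stray triples), then double-count against the at most $21nk$ stray triples produced by the core extraction. You fill in two details the paper leaves implicit — the case analysis via Theorem~\ref{thm:loosechar} justifying the per-color codegree bound, and the factor of $3$ for pairs per stray triple — but neither changes the argument.
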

\begin{proof}
Note that if neither $(u,v,c)$ nor $(v,u,c)$ appears then at most one triple containing $\{u,v\}$ appears in $ \HH'_c$.  So, if neither $ (u,v)$ nor $(u,v)$ appears in
any color then at least $ n-k-2$ triples containing $u$ and $v$ are stray triples.  
As there are at most $ 21nk$ stray triples across the $k$ colors, we see that the number
of pairs $\{u,v\}$ that span no arc of $M$ is at most $\frac{21nk}{n-k-2} = O(k)$.
\end{proof}

We will refer to pairs $\{u,v\}$ such that neither $(u,v)$ nor $ (v,u)$ appears in $M$ as {\em uncovered pairs}.

We define an oriented graph $D$ on $[n]$ as follows
\begin{itemize}[nosep]
\item $(v,u) \in D$ if $(v,u) \in M \text{ and } (u,v) \notin M$
\item $d^+(v)= \left| \{u \in [n]: (v,u) \in D\} \right|$
\item $d^-(v)= \left| \{u \in [n]: (u,v) \in D\} \right|$
\end{itemize}
Note that almost all vertices $v$ have $ d^-(v) > n - k - o(k) $: the out-degrees in $M$ are at most $k$ and there are $ O(k)$ uncovered pairs while the remaining arcs at a vertex are then in-arcs in $D$.

We now apply Directed Triangle Removal.
Set
\[ \delta = \varepsilon^2/17 \]
and consider any digraph $ D'$ formed by deleting $ \delta n^2$ arcs of $D$.  We claim that $D'$ has a directed triangle. Let
$X$ be the set of vertices $x$ such that the number of arcs directed into $x$ that are deleted plus the number of 
uncovered pairs of $M$ incident with $x$ is at least $ \varepsilon n / 4$.  We claim that $|X| < \varepsilon n / 4$.  Indeed, if this
bound does not hold then the number of deleted arcs plus twice the number of uncovered pairs of $M$ is at least
$ (\varepsilon^2/16) n^2 $, which cannot be the case for $k$ sufficiently large by Lemma~\ref{lem:uncover}.  Now consider
the induced digraph $ D'[ [n] \setminus X ] $.  In-degrees in this digraph are at least $ n - k - 1- \varepsilon n / 4 - \varepsilon n /4 $ 
as within $[n] \setminus X$, we have max outdegree $k$ in $M$ and at most $\varepsilon n / 4$ uncovered pairs and deleted in-arcs at a vertex. Thus the minimum in-degree in this induced sub-digraph is at least
\[ n - k - \frac{ n \varepsilon} 2 - 1 = n \left( 1 - \frac{1 - \alpha}{ 1 + \varepsilon} - \frac{ \varepsilon}2 \right)-1  
= n \left(\frac{ \alpha + \varepsilon/2 - \varepsilon^2/2}{ 1 + \varepsilon}\right)-1 > \alpha n, \]
where we use $ \alpha < 2/5$ in the last inequality.  Now, since $\alpha$ is directed triangle sufficient, we
conclude that $ D'[ [n] \setminus X ] $ contains a directed triangle.

Directed Triangle Removal implies that $D$ contains $ \Omega(n^3)$ directed triangles. 
Consider such a triangle $xyz$ and the color $c$ such that $ xyz \in \HH_c$.  No pair among $ xyz$ can be a locked pair for
this color as we have arcs in only one direction.  Furthermore the triple $ xyz$ cannot be contained in one of the stars in $\HH'_c$ as
this would require a vertex of in-degree two in the color $c$ digraph induced on $ xyz$.  We conclude that $xyz$ is a stray triple in
color $c$.  But there are at most $21nk$ stray triples,  and if $k$ is sufficiently large, we can find some directed triangle which 
is not covered by a stray triple and thus is uncolored in our coloring.
This is a contradiction.

\begin{remark}
Applying that $\alpha = .3465$ is directed triangle sufficient \cite{hladky2017}, we have that $r_k(\LL) \leq 1.531k$ for $k$ sufficiently large.
\end{remark}

\section{Messy Path}
The messy path is the hypergraph  $ \M= \{abc,bcd,def\}$. Extremal results for collections of hypergraphs containing $\M$ are studied in \cite{furedi2017}, where the messy path is $P_3(1,2)$ or $P_3(2,1)$, and in \cite{furedi2011}, where the messy path is a $(2,1)$-cluster. 
In \cite{furedi2011}, it is shown that for sufficiently large $n$, $ex(n,\M ) = \binom{n-1}{2}$ with the unique extremal hypergraph being a complete star. In Section~4 we find the extremal number for all $n$.  This bound on the extremal number implies $ r_k(\M) \le 3k$ if $k \geq 3$.

This Section is dedicated to the proof of Theorem~\ref{main:messy}.  The outline of the proof is the same as the for the loose path.  We begin with a structural characterization of $\M$-free hypergraphs.  We then use this characterization to define a colored multidigraph associated
with a $\M$-free $k$-coloring of $ \binom{[n]}{3}$ and proceed to establish our upper bound on $ r_k(\M)$.


\subsection{Messy Path-Free Hypergraph Characterization}
We begin with our characterization of messy path-free hypergraphs.  Note that, like our characterization of $\LL$-free hypergraphs, this 
characterization features disjoint stars.  However, the rest of the characterization is less well-behaved and hence more challenging in the
application that follows.
\begin{theorem}
\label{thm:messychar}
Let $\HH$ be a messy path-free hypergraph, and let $\HH'$ be the 13-core of $\HH$. The vertex set $V(\HH')$ has a partition into 3 sets $X,Y,Z$ and the set $Z$ has a partition into sets $(A_v: v \in Y)$ such that all triples $e$ of the hypergraph $\HH'$ have one of the following two forms:
\begin{itemize}[nosep]
\item $e \subseteq X \cup Y$ where $\HH[X \cup Y]$ is a partial Steiner Triple System
\item $e \cap Y = \{y\}$ and $e \setminus \{y\} \subseteq A_y$
\end{itemize}
\end{theorem}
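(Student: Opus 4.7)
The plan is to adapt the proof structure of Theorem~\ref{thm:loosechar} (for the loose path) to the messy path setting. The key \emph{forbidden configuration} is: if $abc, bcd, def \in \HH'$ with all six vertices distinct, then $\HH$ contains a copy of $\M$. Equivalently, if a pair $\{b,c\}$ has codegree at least two in $\HH'$ with two witnesses $a \ne d$, then every triple through $d$ other than $bcd$ must meet $\{a,b,c\}$. This yields the \emph{bootstrap lemma}: if the codegree of $\{b,c\}$ is at least $4$, then every witness $d$ has every triple through $d$ containing $b$ or $c$---a triple through $d$ avoiding $\{b,c\}$ would have to contain every other witness $a$, but with at least $3$ other witnesses and only two slots this is impossible. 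A further useful fact is that a graph with matching number one is either a star or a triangle, so the $13$-core assumption (every vertex has at least $13$ triples in $\HH'$) forces $\nu(Tr_{\HH'}(v)) \ge 2$ whenever $Tr_{\HH'}(v)$ is not a star.

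For the partition, set $Z := \{v \in V(\HH') : Tr_{\HH'}(v) \text{ is a star}\}$. Each $v \in Z$ has a unique center $c(v)$ (using that $v$ has at least $2$ triples in its trace). I would verify that no center $c(v)$ itself lies in $Z$: a ``mutual center'' configuration $c(y)=v$, $c(v)=y$ would force all triples involving $y$ or $v$ to have the form $\{y,v,w\}$, so the pair $\{y,v\}$ has codegree at least $13$, while the $13$-core assumption provides each $w$ with at least $12$ triples avoiding $\{y,v\}$; this yields a matching edge in $Tr_{\HH'}(w)$ disjoint from $\{y,v\}$ which combined with the heavy pair produces a copy of $\M$. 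Set $A_y := \{v \in Z : c(v) = y\}$, $Y := \{y : A_y \ne \emptyset\}$, and $X := V(\HH') \setminus (Y \cup Z)$.

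It then remains to verify that every triple of $\HH'$ has one of the two stated forms and that $\HH'[X \cup Y]$ is a partial Steiner triple system. For the former, if $e \cap Z = \emptyset$ then $e \subseteq X \cup Y$ (form (a)); if $z \in e \cap Z$ then $c(z) \in e$ by definition, and I would show the remaining vertex $w$ of $e$ lies in $A_{c(z)}$, since otherwise $w$ would be a non-satellite whose trace has matching $\ge 2$, and a suitable disjoint edge of $Tr_{\HH'}(w)$ combined with the heavy pair $\{c(z), z\}$ (codegree $\ge 13$) would produce a copy of $\M$. For the partial Steiner triple system property, suppose for contradiction that $\{u,v,z_1\}, \{u,v,z_2\} \in \HH'$ with $u, v \in X \cup Y$; leveraging the matching-$\ge 2$ structure of $Tr_{\HH'}(u)$ and $Tr_{\HH'}(v)$ together with the forbidden configuration (potentially after using the bootstrap lemma to upgrade neighboring codegrees to $\ge 4$), one derives a copy of $\M$. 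The \textbf{main obstacle} is this last step: unlike the loose path, where the trace-matching condition forces a clean locked-pair structure, here codegree $2$ alone does not directly invoke the bootstrap lemma, and the output structure is genuinely weaker; the case analysis must carefully control distinctness conditions on the chosen disjoint edges, potentially switching among the traces of $u$, $v$, $z_1$, $z_2$ or iterating the bootstrap lemma to boost codegrees before producing the copy of $\M$.
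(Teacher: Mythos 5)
Your architecture tracks the paper's quite closely: the two forbidden configurations, $Z$ defined as the vertices whose traces are stars, $Y$ the set of centers, and the two triple types. Your treatment of triples meeting $Z$ is sound: since every triple through $z$ contains $c(z)$, any size-$2$ matching in $Tr_{\HH'}(w)$ must contain an edge disjoint from $\{z,c(z)\}$, which completes a messy path with two triples on the heavy pair. The mutual-center check is also fine (and the non-mutual case, $c(c(v))\ne v$, is immediate since $v$ would then have only one triple).

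The genuine gap is exactly where you flag the ``main obstacle,'' and it is not merely a matter of careful case analysis: your chosen dichotomy is too weak to close the partial Steiner triple system step. The paper's proof rests on a lemma you do not have, namely that $\tau(Tr_{\HH'}(v))\neq 2,3$ for every vertex of the $13$-core; consequently every vertex of $X\cup Y$ has a trace with vertex cover number at least $4$, hence an edge avoiding \emph{any} three prescribed vertices. Given a kite $uvz_1,uvz_2$ in $X\cup Y$, an edge of $Tr_{\HH'}(z_1)$ avoiding $\{u,v,z_2\}$ instantly yields a copy of $\M$, and the lemma guarantees such an edge exists. Your substitute, ``star versus $\nu(Tr_{\HH'}(v))\ge 2$,'' does not: a trace with matching number $2$ can have all of its (at least $13$) edges meeting $\{u,v,z_2\}$ (e.g.\ a double star), and your bootstrap lemma only activates at codegree $\ge 4$ while a kite guarantees only codegree $2$. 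If you try to push forward anyway (pigeonhole one of $u,v,z_2$ into codegree $\ge 5$ with $z_1$, bootstrap to force every edge of $Tr_{\HH'}(v)$ to meet $\{u,z_1\}$, and so on), you generate new small covers of new traces rather than a contradiction; the regress does not terminate. The paper's vertex-cover lemma is what cuts this off, and its proof is a separate, nontrivial argument (take a minimum cover $U$ of size $2$ or $3$, a cover vertex $u$ of degree at least $5$, an edge $xy$ of the trace avoiding $u$ guaranteed by minimality, and then count the at most $5$ triples through a neighbor $a$ of $u$ to contradict the $13$-core condition). You would need to prove this lemma, or an equivalent statement, before your outline becomes a proof.
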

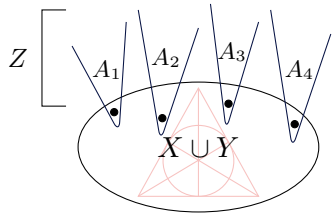
\begin{figure}[ht]
\centering
	\begin{tikzpicture}[scale=1.6]
	\draw[color=re!30] (-.5,-.95) -- (0,-.05) -- (.5,-.95) -- cycle;
	\draw[color=re!30] (-.5,-.95) -- (.25,-.5)
					   (0,-.05)-- (0,-.95)
					   (.5,-.95) -- (-.25,-.5);
	\draw[color=re!30] (.25,-.5) to [closed, curve through = {(0,-.95)}] (-.25,-.5);

	\draw (0,-.54) ellipse (1 and .54) node {$X \cup Y$};

	\filldraw (.25,-.18) circle (.03)
		 	  (-.3,-.3) circle (.03)
			  (-.7,-.25) circle (.03)
			  (.8,-.35) circle (.03);
						
	\draw[rounded corners,color=bl]
				(-1.05,.3) -- (-.66,-.42) -- (-.6,.5)
				(-.5,.6) -- (-.315,-.49) -- (-0,.45)
				(.1,.65) -- (.23,-.4) -- (.5,.5)
				(1.1,.4) -- (.78,-.55) -- (.6,.6);

	\draw (-1.1,.6) -- (-1.3,.6) --(-1.3,-.2) -- (-1.1,-.2);
	\node[font=\small] at (-1.5,.2) {$Z$};
				
	\node[font=\footnotesize] at (-.76,.1) (1) {$A_1$};
	\node[font=\footnotesize] at (-.27,.2) (2) {$A_2$};
	\node[font=\footnotesize] at (.28,.25) (3) {$A_3$};
	\node[font=\footnotesize] at (.84,.1) (4) {$A_4$};
	\end{tikzpicture}
	\caption{$\M$-free hypergraph}
\end{figure}

In the application of Theorem~\ref{thm:messychar} we make use of the following definition.
\begin{definition}
If $ \HH$ is an $\M$-free hypergraph and $ \HH'$ is the 13-core of $\HH$ then the triples in $ \HH \setminus \HH'$ will be called stray triples (or removed triples).
\end{definition}

The remainder of this subsection is a proof of Theorem~\ref{thm:messychar}.  Throughout the proof, we let $\HH$ be an $\M$-free hypergraph and $ \HH'$ be the 13-core of $\HH$. Recall that $F(a,2)$ refers to the 3-uniform hypergraph with vertex set $\{x_1,\dots,x_a,y_1,y_2\}$ and edge set $\{x_iy_1y_2:i\in[a]\}$. We refer to the vertices $x_1,\dots,x_a$ as petals of the $F(a,2)$ and $y_1, y_2$ as the center.

\begin{lemma}
The vertex cover number $\tau(Tr_{\HH'}(v)) \neq 2,3$ for all vertices $v \in V(\HH')$.
\end{lemma}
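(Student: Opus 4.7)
The plan is to suppose for contradiction that $\tau(Tr_{\HH'}(v)) \in \{2,3\}$ with minimum cover $S$, and to extract an inconsistently small cover of $Tr_{\HH'}(v)$ by applying the second forbidden configuration from the preceding remark twice in succession: first in the link of $v$ to restrict where triples at a generic neighbor can live, and then inside the link of that neighbor to cover $Tr_{\HH'}(v)$ itself by just two vertices.

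Since $|Tr_{\HH'}(v)| \geq 13$ and $|S| \leq 3$, pigeonhole gives a vertex $u \in S$ of degree at least $\lceil 13/|S| \rceil \geq 5$ in $Tr_{\HH'}(v)$; set $N := N_{Tr_{\HH'}(v)}(u) \setminus S$, so that $|N| \geq 5 - (|S|-1) \geq 3$. For any $y \in N$ and any other $y' \in N$, the triples $vuy, vuy' \in \HH'$ give a 2-path $\{u,y\}, \{u,y'\}$ in $Tr_{\HH'}(v)$ with middle vertex $u$. The second forbidden configuration then forces every triple of $\HH$ containing $y$ to also contain a vertex of $\{v,u,y'\}$; intersecting over the (at least two) choices of $y' \in N \setminus \{y\}$, every triple of $\HH'$ containing $y$ has another vertex in $\{v,u\}$. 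Combined with $d_{\HH'}(y,v) \leq |S| \leq 3$ (since $y \notin S$ means every edge of $Tr_{\HH'}(v)$ at $y$ terminates in $S$) and $d_{\HH'}(y) \geq 13$, this forces $d_{\HH'}(y,u) \geq 11$.

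Now I repeat the manoeuvre inside $Tr_{\HH'}(y)$: the bound $d_{\HH'}(y,u) \geq 11$ yields at least ten vertices $w \neq v$ with $yuw \in \HH'$. For such a $w$ the triples $vuy, wuy \in \HH'$ produce a 2-path $\{u,v\}, \{u,w\}$ in $Tr_{\HH'}(y)$, and the second forbidden configuration (applied at $y$) forces every triple of $\HH$ containing $v$ to also contain a vertex of $\{y,u,w\}$. Intersecting over two distinct such $w$, every triple of $\HH$ containing $v$ has another vertex in $\{y,u\}$, i.e., $\{y,u\}$ is itself a vertex cover of $Tr_{\HH'}(v)$.

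If $\tau(Tr_{\HH'}(v)) = 3$, this size-$2$ cover is already the desired contradiction. If $\tau(Tr_{\HH'}(v)) = 2$, I use $|N| \geq 3$ to run the above argument for three distinct vertices $y_1, y_2, y_3 \in N$, obtaining three covers $\{y_i, u\}$; any edge of $Tr_{\HH'}(v)$ not through $u$ would have to contain $y_1, y_2, y_3$ simultaneously, which is impossible, so every edge passes through $u$ and $\tau(Tr_{\HH'}(v)) \leq 1$, again a contradiction. The main thing to keep track of is the side conditions of the forbidden configuration --- pairwise distinctness of $v, u, y, y', w$ and membership of the invoked triples in $\HH'$ --- but these follow immediately from $N \cap S = \varnothing$ together with the constructions of $N$ and of the set of valid $w$.
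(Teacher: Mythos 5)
Your proof is correct in substance, and it reaches the contradiction by a different route than the paper. Both arguments open the same way (pigeonhole a cover vertex $u$ of degree at least $5$ in $Tr_{\HH'}(v)$ and apply the path-type forbidden configuration to $2$-paths centered at $u$), but the endgames differ. The paper picks a second cover vertex $x$ with a private edge $xy$ and three neighbors $a,b,c$ of $u$ avoiding $x,y$; the two forbidden configurations then force every triple through the petal $a$ either to contain $v$ or to contain $u$ together with $x$ or $y$, capping $\deg_{\HH'}(a)$ at $5$ and contradicting the $13$-core directly. You instead bootstrap twice --- first in $Tr_{\HH'}(v)$ to force $\deg_{\HH'}(y,u)$ to be large, then in $Tr_{\HH'}(y)$ to conclude that $\{y,u\}$ covers $Tr_{\HH'}(v)$ --- and contradict the minimality of the cover rather than the core degree. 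Your version is slightly longer but avoids the paper's somewhat delicate ``at most $5$ triples through $a$'' count; the paper's is shorter and uses the configurations only once. One small repair is needed in your intersection steps: ``intersecting over two choices'' does not quite give ``\emph{every} triple,'' since a triple such as $\{y,y_1',y_2'\}$ (respectively $\{v,w_1,w_2\}$) can meet both witnesses simultaneously. In the first step this costs you at most one exceptional triple, so you get $\deg_{\HH'}(y,u)\ge 10$ rather than $11$, which is still ample; in the second step you must intersect over \emph{three} distinct $w$ (you have at least nine available) to make ``every triple containing $v$ meets $\{y,u\}$'' exact, which is what the covering conclusion requires. With that adjustment the argument is sound.
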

\begin{proof}
Let $v$ be a fixed vertex. Assume for the sake of contradiction that the vertex cover number of the trace of $v$ in $\HH'$ is 2 or 3. Let $U$ be a minimal vertex cover in $Tr_{\HH'}(v)$. Since $ Tr_{\HH'}(v)$ has at least 13 edges, one of these vertices has degree at least 5, say $u$. Let $x \in U$ with $x \neq u$. 
Note that by definition, there exists an edge $xy$ where $y \neq u$. Then $u$ has a neighbor $a$ in $Tr_{\HH'}(v)$ which is not in $U$ and is neither $x$ nor $y$. Consider a triple $afg$.
\begin{itemize}[nosep]
\item if $u,v \notin afg$ then $\M$ appears. (Consider $afg$ together with the triple corresponding to a path in $ Tr_{\HH'}(v) $ centered at $u$.)
\item if $ v \not\in afg$, $ u \in afg $, and $ \{x,y\} \cap \{ f,g\} = \emptyset $ then $ \M$ appears. (Consider $\{ afg = aug, auv, vxy\}$.)
\end{itemize}
So we may assume that such triples do not appear in $ \HH'$. Now we observe that there are at most 3 triples that contain $a$ and $v$ by the bound on the vertex cover number. There are at most two triples that contain $a$ and $u$ as such triples contain $x$ or $y$. Thus, $a$ is in at most $5$ triples in $\HH'$, which is a contradiction.
\end{proof}
We can now start to identify the parts in the vertex partition set forth in Theorem~\ref{thm:messychar}.  Let $Z$ be the set of vertices whose traces in $\HH'$ have vertex cover number 1.
\begin{lemma}
\label{lem:tau1}
If $z \in Z$ and $y$ is the center of the star in $Tr_{\HH'}(z)$ , then $\tau(Tr_{\HH'}(y)) \geq 4$.  Furthermore, if $u$ is any leaf of the star in $Tr_{\HH'}(z)$, then $\tau(Tr_{\HH'}(u)) = 1$.
\end{lemma}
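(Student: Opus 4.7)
The plan is to prove both claims by contradiction, exhibiting a messy path in $\HH'$ in each case to violate the $\M$-freeness of $\HH$. Both parts lean on the preceding lemma (which rules out $\tau(Tr_{\HH'}(v)) \in \{2,3\}$ for any vertex $v$) to reduce the analysis to either $\tau = 1$ or $\tau \geq 4$, so I only need to eliminate a single alternative in each assertion.

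For $\tau(Tr_{\HH'}(y)) \geq 4$, I would assume for contradiction that $\tau(Tr_{\HH'}(y)) = 1$, centered at some vertex $w$. Let $a_1, a_2, \dots$ denote the leaves of the star $Tr_{\HH'}(z)$, so $yza_i \in \HH'$ for each $i$ and the $13$-core hypothesis guarantees at least $13$ such leaves. Each triple $yza_i$ contains $y$ and hence $w$, forcing $w \in \{z, a_i\}$ for every $i$; since the leaves are distinct and more than one, this pins down $w = z$. Consequently the triples of $\HH'$ through $y$ and those through $z$ coincide, namely $\{yza_i\}_i$. Now I would fix any leaf $a_1$; the $13$-core property supplies at least $12$ further triples $a_1bc \in \HH'$, and a short check shows $\{b,c\} \cap \{y,z\} = \varnothing$ for each (otherwise, the triple would contain $y$ or $z$, hence both, and be forced to equal $yza_1$). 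Choosing a second leaf $a_2 \notin \{a_1, b, c\}$—possible since the star has $\geq 13$ leaves—the triples $yza_2$, $yza_1$, $a_1bc$ overlap in $2, 1, 0$ vertices in order, producing a messy path.

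For the second claim, I would suppose $\tau(Tr_{\HH'}(u)) \geq 4$ toward a contradiction. In particular $\tau \geq 3$, so $\{y,z\}$ fails to cover $Tr_{\HH'}(u)$, and there exists a triple $ubc \in \HH'$ with $b, c \notin \{y,z\}$. Selecting a leaf $a_1 \notin \{u, b, c\}$ of the star around $z$ (again possible by leaf abundance), the triples $zya_1$, $zyu$, $ubc$ form a messy path directly: the first two share $\{y,z\}$, the last two share $\{u\}$, and $zya_1 \cap ubc = \varnothing$ by construction. The main obstacle in both arguments is not conceptual but bookkeeping—verifying that the six vertices of each witnessed messy path are pairwise distinct—and the $13$-core hypothesis is precisely what supplies enough leaves in $Tr_{\HH'}(z)$ to dodge any bounded number of forbidden choices.
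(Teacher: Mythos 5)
Your proof is correct and follows essentially the same route as the paper: for the first claim you identify the $F(\ge 13,2)$ centered at $\{y,z\}$ and extract a messy path from a petal with a second triple, and for the second claim you combine an edge of $Tr_{\HH'}(u)$ avoiding $\{y,z\}$ with two triples of the star at $z$. The only difference is that you make explicit the bookkeeping (e.g., that the star center of $Tr_{\HH'}(y)$ must be $z$, and the distinctness of the six vertices) that the paper's terser argument leaves implicit.
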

\begin{proof}
Assume for the sake of contradiction that $\tau(Tr_{\HH'}(y)) = 1$. Then, $y,z$ form the center of a $F(13,2)$ in $\HH'$. Note that any petal of this $F(13,2)$ has degree 1 or else there is a messy path, a contradiction.

Assume for the sake of contradiction that some leaf $u$ has $\tau(Tr_{\HH'}(u)) \geq 4$. Then, we can find a triple in $ \HH'$ that contains $u$ and neither $y$ nor $z$. This triple together with two triples corresponding to a path of two edges in $Tr_{\HH'}(z)$ forms a messy path, a contradiction.
\end{proof}
We now let $Y$ be the set of vertices $y$ for which there is $z \in Z$ such that $y$ is the center of the star in $Tr_{\HH'}(z)$. For $y \in Y$, let $A_y$ be the set of $z \in Z$ for which $y$ is the center of the star in $Tr_{\HH'}(z)$. Finally, let $X = V(\HH') \setminus (Y \cup Z)$.
Note that in $X,Y$, every vertex has a trace with vertex cover number at least 4. Further note that $(A_y : y \in Y)$ partitions $Z$.

\begin{lemma}
\label{lem:pSTS}
The hypergraph $\HH'[X \cup Y]$ is a partial Steiner Triple System.
\end{lemma}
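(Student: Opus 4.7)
The plan is to prove $\HH'[X \cup Y]$ is a partial Steiner Triple System by contradiction using the second forbidden configuration from the remark. Suppose some pair $u,v \in X \cup Y$ has codegree at least $2$ in $\HH'[X \cup Y]$, so that we have distinct triples $uvw_1, uvw_2 \in \HH'$ with $w_1, w_2 \in X \cup Y$. Then the four vertices $u,v,w_1,w_2$ are distinct, and the edges $vw_1, vw_2$ form a two-edge path in $Tr_{\HH'}(u)$ centered at $v$. This is exactly the first half of the second forbidden configuration, with the role of the trace's central vertex played by $u$ and $a=w_1, b=v, c=w_2$.

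To complete the configuration, I need a triple $w_1 xy \in \HH'$ with $x,y \notin \{u,v,w_2\}$. Here I use the key property of vertices in $X \cup Y$: by construction of $Z$ (vertices with $\tau = 1$) together with the earlier lemma ruling out $\tau \in \{2,3\}$, every vertex in $X \cup Y$ satisfies $\tau(Tr_{\HH'}(\cdot)) \geq 4$. In particular $\tau(Tr_{\HH'}(w_1)) \geq 4$, so the three-vertex set $\{u,v,w_2\}$ cannot be a vertex cover of $Tr_{\HH'}(w_1)$. Hence there exists an edge $xy \in Tr_{\HH'}(w_1)$ avoiding $\{u,v,w_2\}$, which yields the required triple $w_1 xy \in \HH'$ (automatically $x,y \neq w_1$ and $x \neq y$ since $xy$ is an edge of the trace).

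Combining these, the second forbidden configuration applies and produces a copy of $\M$ inside $\HH$, contradicting the hypothesis that $\HH$ is $\M$-free. Therefore every pair in $X \cup Y$ has codegree at most $1$ in $\HH'[X\cup Y]$, which is precisely the partial Steiner Triple System condition.

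There is no serious obstacle here: the whole argument is a one-shot application of the forbidden configuration, and the quantitative crux is just that $\tau \geq 4$ gives enough slack to avoid any fixed set of three excluded vertices when selecting the auxiliary edge in $Tr_{\HH'}(w_1)$. The only mild care needed is to verify that $u, v, w_1, w_2$ are indeed four distinct vertices (immediate from $uvw_1 \neq uvw_2$) so that the two-edge path in $Tr_{\HH'}(u)$ is a genuine path rather than a repeated edge.
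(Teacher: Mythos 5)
Your proof is correct and follows essentially the same route as the paper: assume a kite $uvw_1, uvw_2$ in $\HH'[X\cup Y]$, use the fact that every vertex of $X\cup Y$ has trace with vertex cover number at least $4$ to find an edge of $Tr_{\HH'}(w_1)$ avoiding $\{u,v,w_2\}$, and assemble a messy path. The only cosmetic difference is that you route the contradiction explicitly through the stated forbidden configuration, while the paper exhibits the messy path directly.
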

\begin{proof}
Assume for the sake of contradiction that there are triples 
$wxy, xyz \in \HH'[X \cup Y]$. Since $Tr_{\HH'}(w)$ has vertex cover number at least 4, there exists an edge in $Tr_{\HH'}(w)$ not containing $x,y,z$. The triple corresponding to this edge together with $wxy,xyz$ form a messy path. 
\end{proof}
Theorem~\ref{thm:messychar} follows from Lemmas~\ref{lem:tau1}~and~\ref{lem:pSTS}.



\subsection{Multicolor Ramsey number for the Messy Path: Proof of Theorem~\ref{main:messy}}

The proof of Theorem~\ref{main:messy} uses a digraph structure analogous to the structure 
introduced in the proof of Theorem~\ref{main:loose2}.  In that proof it was sufficient to find a cubic number of directed triangles. This is not 
sufficient for the messy path as a cubic number of
triples can be contained in the `Steiner parts' of the color classes.  
So we make a more subtle argument that takes more of the structure into account.


Throughout this section we 
consider a messy path-free $k$-coloring of $\binom{[n]}{3}$ where $k$ is assumed to be large.  We establish upper bounds on $n$ that imply Theorem~\ref{main:messy}. Let $C$ be the set of colors.  For each color $c \in C$, let $ \HH_c $ be the collection of triples colored with color $c$.  We apply Theorem~\ref{thm:messychar}.  For each $ \HH_c$, we let $ \HH'_c $ be the 13-core of $ \HH_c$ and we let $ X_c, Y_c, Z_c$ be the
partition of the vertex set given by Theorem~\ref{thm:messychar}.  Furthermore, for   
each vertex $v$ in the set $Y_c$ let $A_{v,c}$ be the set $A_{v}$ given by Theorem~\ref{thm:messychar}.  
We define a colored multidigraph $M$ on the vertex set $V = [n]$ as follows. The directed arc $(u,v)$ appears in the multidigraph with color $c$ if $u \in A_{v,c}$, i.e., $u$ points to $v$ if $u$ is contained in the body of a star centered at $v$. 
For a specific color, we denote this arc by $(u,v,c)$. 

Define
\[m^-_c(v) = \abs{\{u \in [n]: (u,v,c) \in M\}}, \quad m^-(v) = \sum_{c \in C} m^-_c(v)\]
\[m^+_c(v) = \abs{\{u \in [n]: (v,u,c) \in M\}}, \quad m^+(v) = \sum_{c \in C} m^+_c(v)\]
Note that $m^+(v) \leq k$ as $v$ can be in the body of at most one star in each $ \HH'_c$.

\begin{lemma}
\label{lem:messynoarc}
There are at most $O(k)$ pairs of vertices $\{u,v\}$ such that neither $(u,v)$ nor $(v,u)$ appear as an arc in any color in $M$.
\end{lemma}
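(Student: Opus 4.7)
The plan is to mirror the proof of the loose path analogue, Lemma~\ref{lem:uncover}: show that whenever no arc between $u$ and $v$ appears in color $c$, at most one triple of the core $\HH'_c$ can contain $\{u,v\}$, and then close via a stray-triple double count.

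The main step is a case analysis on the locations of $u$ and $v$ in the partition $X_c \cup Y_c \cup Z_c$ supplied by Theorem~\ref{thm:messychar}, keeping in mind that $(u,v,c)$ is an arc of $M$ precisely when $v \in Y_c$ and $u \in A_{v,c}$. If both $u,v$ lie in $X_c \cup Y_c$, then any triple of $\HH'_c$ containing them must live inside $\HH'_c[X_c \cup Y_c]$, which is a partial Steiner Triple System by Lemma~\ref{lem:pSTS}, so there is at most one such triple. If one vertex is in $X_c$ and the other in $Z_c$, the characterization forbids any triple from containing both. If $u \in Y_c$ and $v \in Z_c$ (or vice versa), any triple of $\HH'_c$ containing both has the form $\{u,v,w\}$ with $u$ playing the role of the center $y$ and $v \in A_{u,c}$, which would force the arc $(v,u,c)$ to exist, contradicting our assumption. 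Finally, if $u,v \in Z_c$, any triple containing them has the form $\{u,v,y\}$ with $y \in Y_c$ and $u,v \in A_{y,c}$; since the sets $(A_{y,c})_{y \in Y_c}$ partition $Z_c$, such a $y$ is unique, giving at most one triple.

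With the one-triple-per-color claim in hand, suppose $\{u,v\}$ is uncovered. Of the $n-2$ triples containing this pair, at most $k$ lie in any core $\HH'_c$, so at least $n-k-2$ are stray. The $13$-core construction removes at most $12$ triples per vertex, so each color contributes at most $12n$ stray triples, giving at most $12nk$ stray triples overall. Double counting pair--triple incidences then bounds the number of uncovered pairs by $\frac{36nk}{n-k-2}$, which is $O(k)$ since $n = \Theta(k)$ and $n-k = \Omega(k)$.

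The main subtlety is the mixed $Y_c$--$Z_c$ case of the analysis, where both halves of the characterization must be ruled out simultaneously: the Steiner part cannot contain a $Z$-vertex, and the only candidate in the star part would force an arc and contradict uncoveredness. Apart from tracking this case carefully, the proof is a direct adaptation of the loose path argument.
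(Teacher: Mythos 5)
Your proposal is correct and follows the paper's argument: establish that an uncovered pair lies in at most one core triple per color, conclude it lies in at least $n-k-2$ stray triples, and finish by double counting against the at most $12nk$ stray triples. The paper simply asserts the one-triple-per-color claim without the case analysis you supply (your cases are all sound), and your extra factor of $3$ from counting pairs per stray triple is if anything more careful than the paper's stated bound; both give $O(k)$.
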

\begin{proof}
Note that if neither $(u,v,c)$ nor $(v,u,c)$ appears then there is at most one triple in $ \HH'_c$ that contains the pair $\{u,v\}$.  Therefore $k$ plus the number of stray triples containing $\{u,v\}$ is at least $n-2$, and such a pair $\{u,v\}$ is contained in at least $n-k-2$ stray triples. As there are at most $12nk$ stray triples across the colors, there are at most $\frac{12nk}{n-k-2} = O(k)$ such pairs of vertices.
\end{proof}

We need further notation to extract more refined information from the colored multidigraph $M$.
For each vertex $v$ let $s(v)$ be the number of colors $c$ for which $v \in X_c \cup Y_c $; in other words, $s(v)$ is the number of colors $c$ for which $v$ is in the Steiner triple system for color $c$. Define
\[S = \sum_{v \in [n]} s(v).\]
Note that $v\in A_{u,c}$ implies that $v$ is not part of a partial Steiner triple system in $ \HH'_c$. Hence, we have
\begin{equation}
    \label{eq:zero}
m^+(v) + s(v) \leq k \ \ \ \text{ and }  \ \ \ \abs{ M } = \sum_{v \in [n]} m^+(v) \leq \sum_{v\in [n]} k-s(v) = nk - S.
\end{equation}
We will also want to keep track of stray triples.  For $u,v \in [n]$ define
\begin{align*}
 \xi_{uv} & = \left| \left\{ z : \exists c \in C \text{ such that } uvz \in \HH_c \setminus \HH'_c \right\} \right| \\
\xi_{u} & = \left| \left\{\{y, z\} : \exists c \in C \text{ such that } uyz \in \HH_c \setminus \HH'_c \right\} \right|.
\end{align*}
Note that $\frac{1}{3}\sum_{u \in [n]} \xi_u, \frac{1}{3}\sum_{\{u,v\} \in \binom{[n]}{2}} \xi_{uv} \leq 12nk$. Next, we categorize pairs of vertices.
\begin{itemize}[nosep]
\item We say that a vertex pair $\{u,v\}$ is a two-cycle pair if $(u,v), (v,u) \in M$. Let $t(u)$ be the number of two-cycle pairs that contain $u$. We also define
\[T  = \frac{1}{2} \sum_{u \in [n]} t(u) = \text{ number of two-cycle pairs of vertices}, \qquad \bar{t} = \frac{2T}{n}\]
\item We say a pair $\{u,v\}$ is a parallel pair if $\{u,v\}$ is not a two-cycle pair and there are at least two arcs in $M$ contained in the pair, i.e., $(u,v,c)$ and $(u,v,c')$ appear or $(v,u,c)$ and $(v,u,c')$ appear for $c,c'$ distinct. Let $p^+(u)$ and $ p^-(u)$ be the number of parallel pairs that are directed out of and into $u$, respectively. We also let:
\[P  = \sum_{u \in [n]} p^+(u) =  \text{ number of parallel pairs of vertices}, \qquad \hat{p} = \frac{P}{n-k}\]
\item We say that a pair $\{u,v\}$ is a solo pair if either $(u,v,c)$ or $(v,u,c)$ for some color $c$ is the only arc on the pair. Let $q^+(u)$ and $q^-(u)$ be the number of solo arcs that are directed out of and into $u$ respectively.
\item We say that a pair $\{u,v\}$ is an uncovered pair if neither $(u,v)$ nor $ (v,u)$ appears in $M$.
\end{itemize}
Now, Lemma~\ref{lem:messynoarc} implies that at least $\binom{n}{2} - O(k)$ pairs of vertices are covered with at least one arc, and each two-cycle pairs and each parallel pair requires at least one additional arc. It follows that we have
\begin{equation}
\label{eq:count}
P + T \le \abs{M} - \binom{n}{2} + O(k) \le nk - S - \binom{n}{2} + O(k). 
\end{equation}
Below we establish two upper bounds on $n$, one which is a function of $P$ and another that is a function of $T$.  These bounds taken together with (\ref{eq:count}) imply Theorem~\ref{main:messy}.

We need additional notation. We define an oriented graph $D$ on $[n]$, which defines the one-way out-neighborhood and in-neighborhood $D^+(v),D^-(v)$ (and out-degree and in-degree) for a vertex $v \in [n]$ as follows
\begin{itemize}[nosep]
\item $(v,u) \in D$ if $(v,u) \in M \text{ and } (u,v) \notin M$
\item $D^+(v)=\{u \in [n]: (v,u) \in D\} , d^+(v) = \abs{D^+(v)}$
\item $D^-(v)=\{u \in [n]: (u,v) \in D\}, d^-(v) = \abs{D^-(v)}$
\end{itemize}
Note that $D$ consists of arcs corresponding to solo pairs and parallel pairs. 

Consider a fixed vertex $v$.  Observe that for any other vertex $u$ we either have $ (v,u) \in M$ (and $u$ is counted by $m^+(v)$), $ (u,v) \in M$ and $ (v,u) \not\in M $ (and $u$ is counted by $d^-(v)$), or $\{u,v\}$ is uncovered (and there are at least $n-k$ stray triples that contain $uv$). Therefore $ m^+(v) + d^-(v) + \xi_v/(n-k)  \ge n-1 $. Applying (\ref{eq:zero}) it follows that we have
\begin{equation}
\label{eq:onewayin}
k-s(v)+p^-(v) + q^-(v) \geq m^+(v) + d^-(v) \geq n - \xi_v/(n-k) - 1
\end{equation}

With these preliminary observations in hand, we are now ready to state two key Lemmas. The first Lemma gives a bound on $n$ in terms of $\bar{t}$ while the second Lemma gives a bound on $n$ in terms of $ \hat{p}$. We then complete the proof by combining these Lemmas and the bound on $ P +T$ given in (\ref{eq:count}). 
\begin{lemma}
\label{lem:messytriangles}
If $ \varepsilon>0 $ and $k$ is sufficiently large then
\[n \leq \frac{4k}{3} + \bar{t} + \varepsilon k\]
\end{lemma}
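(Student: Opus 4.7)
My plan is to follow the template of Theorem~\ref{main:loose2} with two modifications: exploit the refined identity $m^+(v) = k - s(v)$, which yields the sharpened sum $2T + 2P + Q + S \leq nk$, and replace the simple ``stray or Steiner'' upper bound on the directed triangle count with a quantitative Moon--Moser estimate. Combined with $T + P + Q = \binom{n}{2} - O(k)$ from Lemma~\ref{lem:messynoarc}, the target $n \leq \tfrac{4k}{3} + \bar{t} + \varepsilon k$ becomes equivalent to the arc-count bound $P + Q \leq \tfrac{2nk}{3} + O(\varepsilon nk)$ on the oriented graph $D$.

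For the upper bound on the number $T_D$ of directed triangles of $D$, I would repeat the loose-path argument: any triangle $xyz \in D$ coloured $c^\ast$ cannot have $xyz$ be a star triple of $c^\ast$, since a star triple $y\,u\,v$ would put both $(u,y,c^\ast), (v,y,c^\ast) \in M$, contradicting the one-way orientation of the triangle at $y$. Hence $xyz$ is either stray or lies in the partial Steiner triple system $X_{c^\ast} \cup Y_{c^\ast}$, and summing over colours gives $T_D \leq \sum_c |X_c \cup Y_c|^2 / 6 + 12 nk \leq n S / 6 + O(nk)$ since $|X_c \cup Y_c| \leq n$ and $\sum_c |X_c \cup Y_c| = S$.

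For the lower bound, I would refine \eqref{eq:onewayin} to $d^-(v) \geq n - 1 - k + s(v) + p^+(v) - \xi_v/\Omega(k)$, which in particular gives $d^-(v) \geq n - k - o(n)$ outside a small bad set $X \subset [n]$ of vertices with anomalously large $\xi_v$ or $d^+(v)$. The underlying graph of $D|_{[n]\setminus X}$ has $\binom{n}{2} - T - O(k)$ edges, so Moon--Moser supplies at least $\tfrac{n(n - \bar{t})(n - 2\bar{t})}{6} - O(nk)$ triangles, and the number of transitive triples, bounded by $\sum_v \binom{d^+(v)}{2}$, is at most $n \binom{(n-\bar{t})/2}{2} + O(n^2)$ by convexity and $\sum_v d^+(v) = P + Q \leq n(n - \bar{t})/2$. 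Subtracting yields $T_D \geq \tfrac{n(n - \bar{t})(n - 5\bar{t})}{24} - O(nk)$.

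Plugging these bounds together gives $S \geq \tfrac{(n-\bar{t})(n-5\bar{t})}{4} - O(k)$, and substituting into \eqref{eq:count} with $T = n\bar{t}/2$ and $P \geq 0$ yields $\tfrac{3n}{4} \leq k + \bar{t} + O(\varepsilon k)$ in the regime $\bar{t} \ll n$. The main obstacle is the coefficient on $\bar{t}$: the naive argument above produces $n \leq \tfrac{4k}{3} + \tfrac{4\bar{t}}{3} + O(\varepsilon k)$ rather than the claimed $n \leq \tfrac{4k}{3} + \bar{t} + \varepsilon k$, so an extra factor of $\bar{t}/3$ must be absorbed. I would do this either by exploiting the $p^+(v)$ term in the refined \eqref{eq:onewayin} (which tightens the convexity bound on $\sum_v \binom{d^+(v)}{2}$ by exactly $\sim n\bar{t}^2/8$) or by combining with the auxiliary inequality $n + \bar{t} \leq 2k + O(1)$ obtained from the nonnegativity of $S$ in \eqref{eq:count}. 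The boundary case $\bar{t} \gtrsim n/5$, where the Moon--Moser-minus-transitive estimate degenerates, would be handled directly from the $n + \bar{t} \leq 2k$ bound, which already gives $n \leq \tfrac{4k}{3} + \bar{t}$ whenever $\bar{t} \geq k/3$.
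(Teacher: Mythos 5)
Your global triangle-counting strategy is genuinely different from the paper's proof, which is local: it picks a vertex $v$ maximizing $d^+(v)+\tfrac{1}{2}d^-(v)-\tfrac{2\xi_v}{\varepsilon k}$ (whose average is $\ge \tfrac34(n-\bar t)-O(1/\varepsilon)$ since $\sum_v\bigl(d^+(v)+\tfrac12 d^-(v)\bigr)=\tfrac32|D|$ and $|D|\ge\binom n2-T-O(k)$), then a vertex $u\in N^-(v)$ with at least $\tfrac12 d^-(v)-O(1/\varepsilon)-\tfrac{2\xi_v}{\varepsilon k}$ out-arcs of $M$ into $N^-(v)$, and charges $d^+(v)+\tfrac12 d^-(v)$ against the single budget $k\ge s(u)+m^+(u)$: each $x\in N^+(v)$ forces $uvx$ to be stray, Steiner at $u$, or an out-arc $u\to x$, and these out-arcs are disjoint from $u$'s out-arcs into $N^-(v)$. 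This yields $k\ge\tfrac34(n-\bar t)-O(1/\varepsilon)$ directly, with the coefficient $1$ on $\bar t$ falling out for free. Your route has two genuine problems. First, the convexity step is backwards: Jensen gives $\sum_v\binom{d^+(v)}{2}\ge n\binom{(n-\bar t)/2}{2}$, a \emph{lower} bound, not an upper bound. To upper-bound the transitive-triple count you must use the individual cap $d^+(v)\le m^+(v)\le k$, which gives only $\sum_v\binom{d^+(v)}{2}\le\tfrac{k}{2}\sum_v d^+(v)\approx\tfrac{nk(n-\bar t)}{4}$; since $n\approx\tfrac{4k}{3}<\tfrac{3k}{2}$ in the regime of interest, this exceeds the Moon--Moser count of all triangles ($\tfrac{3n^3}{16}>\tfrac{n^3}{6}$ when $\bar t$ is small), so the cyclic-triangle lower bound becomes vacuous and no bound on $S$ results.

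Second, even granting your (unjustified) estimate $\sum_v\binom{d^+(v)}{2}\le n\binom{(n-\bar t)/2}{2}$, you correctly compute that the method delivers $n\le\tfrac{4k}{3}+\tfrac{4\bar t}{3}+O(\varepsilon k)$, and neither proposed repair closes the gap to the stated coefficient $1$. The auxiliary inequality $n+\bar t\le 2k+O(1)$ does not suffice: at $\bar t=\tfrac{2k}{7}$ both of your bounds allow $n=\tfrac{12k}{7}=\tfrac{36k}{21}$, whereas the lemma asserts $n\le\tfrac{4k}{3}+\tfrac{2k}{7}=\tfrac{34k}{21}$, so the minimum of the two is strictly weaker than the claim on the whole range $0<\bar t<k/3$. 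And the proposed $\sim n\bar t^2/8$ tightening from the $p^+(v)$ term is second order in $\bar t$, while the deficit $\tfrac{\bar t(n-5\bar t)}{4}$ in your inequality $\tfrac{3n^2}{4}-n\bar t+\tfrac{5\bar t^2}{4}\le nk$ versus the required $\tfrac{3n^2}{4}-\tfrac{3n\bar t}{4}\le nk$ is first order; it cannot be absorbed this way. As written, the proposal does not prove the lemma; I would recommend abandoning the global count in favor of the two-step extremal-vertex argument.
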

\begin{proof}
First note that 
\[\abs{D} \geq \binom{n}{2} - T - O(k) \]
as every pair of vertices is either a solo pair, a parallel pair, a two-cycle pair, or an uncovered pair. Then note that $\sum_{v \in [n]} d^+(v) + \frac{d^-(v)}{2} = \frac{3}{2}\abs{D}$ and so
\[\E_{v \in [n]} \left[ d^+(v) + \frac{d^-(v)}{2} - \frac{ 2 \xi_v}{ \varepsilon k} \right] \geq \frac{3(n - \bar{t})}{4}  - O(1/\varepsilon). \]
Let $v$ be a vertex that maximizes $ d^+(v) + \frac{d^-(v)}{2} - \frac{  2\xi_v}{\varepsilon k}$.

Note that we may assume $ d^-(v) > \varepsilon k$ as otherwise we have 
\[ k > d^+(v) \ge \frac{3(n - \bar{t})}{4}  - O(1/\varepsilon) -\frac{ \varepsilon k}2 \]
and the desired bound follows.

By Lemma~\ref{lem:messynoarc}, we have $\abs{\{\{u,w\} \in \binom{D^-(v)}{2}: (u,w)\text{ or }(w,u) \in M\}} \geq 
\binom{d^-(v)}{2} - O(k)$.  Therefore
\[ \E_{u \in D^-(v)} \left[  \abs{\left\{ w \in D^-(v) : (u,w) \in M \right\} } - \xi_{uv} \right] \ge \frac{ d^-(v)}{2} - O(1/\varepsilon) - \frac{ 2 \xi_v}{\varepsilon k }. \]
Consider a vertex $ u \in D^-(v)$ that maximizes $\abs{\left\{ w \in D^-(v) : (u,w) \in M \right\}} - \xi_{uv}$. Note that for every vertex $x \in D^+(v)$, the triple $uvx$ appears in some $\HH_c$ in one of the following ways: $uvx$ is contained in the partial Steiner triple system on $X_c \cup Y_c$; $u,v \in A_{x,c}$ so that $(u,x,c)$ and $(v,x,c)$ both appear in $M$; or $uvx$ is one of the stray triples. (Note that we are making of the use of the fact that $D$ only includes arcs that are not in 2-cycles in $M$. In particular $ (x,v)$ and $ (v,u)$ do not appear in $M$.) Therefore, as $D^-(v)$ and $D^+(v)$ are disjoint,
\[ s(u) + m^+(u) -  \abs{\left\{ w \in D^-(v) : (u,w) \in M \right\} }  + \xi_{uv} \ge d^+(v).\]
Recalling $k \geq s(u) + m^+(u)$ from (\ref{eq:zero}) we have
\begin{align*}
k &\geq d^+(v) +  \abs{\left\{ w \in D^-(v) : (u,w) \in M \right\} } - \xi_{uv} \geq 
 d^+(v)   + \frac{ d^-(v)}{2} - O(1/\varepsilon) - \frac{ 2 \xi_v}{\varepsilon k } \\
 &\geq   \frac{3(n - \bar{t})}{4}  - O(1/\varepsilon). 
 \end{align*}
Rearranging and letting $k$ be sufficiently large, we have
\[n \leq \frac{4k}{3} + \bar{t} + \varepsilon k\qedhere\]
\end{proof}

\begin{lemma}
\label{lem:solos}
If $0<\varepsilon<0.01$ and $k$ is sufficiently large then 
\[n \leq \max\left\{1.59k, \frac{3}{2}k + \frac{\hat{p}}{2} + \varepsilon k\right\}\]
\end{lemma}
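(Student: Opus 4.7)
The plan is to follow the general strategy of the proof of Lemma~\ref{lem:messytriangles}: establish a triangle-counting bound at a carefully chosen vertex, then convert it into a bound on $n$. The new ingredient is an enhanced triangle inequality that incorporates the parallel-pair count so that $\hat p$ appears in the final estimate. As a first step, I would sharpen the budget observation: since each parallel out-pair at $v$ contributes at least two arcs to $m^+(v)$, the bound $m^+(v) + s(v) \leq k$ gives $d^+(v) + p^+(v) + t(v) + s(v) \leq k$, which combined with the pair count $d^+(v) + d^-(v) + t(v) + \mathrm{uncov}(v) = n - 1$ and Lemma~\ref{lem:messynoarc} yields
\[d^-(v) \geq n - k - 1 + s(v) + p^+(v) - O(\xi_v/k). \qquad (\ast)\]
In particular, $d^-(v)$ is always at least $n - k - o(k)$, so the triangle machinery applies at every vertex.

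Next, I would derive an enhanced triangle inequality. For each $v$ and $u \in N^-(v)$, the three-case analysis of the triples $\{uvx : x \in N^+(v)\}$ in Lemma~\ref{lem:messytriangles} (Case A: both $u,v$ point to $x$ in a common color; Case B: $uvx$ is a Steiner triple, contributing at most $s(u)$ values of $x$; Case C: $uvx$ is stray) is strengthened by the observation that each parallel out-pair of $u$ occupies at least two of $u$'s $k - s(u)$ out-arc slots. Careful bookkeeping then produces
\[d^+(v) + p^+(u) + \bigl|\{w \in N^-(v) : (u,w) \in M\}\bigr| \leq k + \xi_{uv} - 1. \qquad (\ast\ast)\]

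I would then choose $v$ to maximize a suitable weighted quantity (following the argument in Lemma~\ref{lem:messytriangles}) and average $(\ast\ast)$ over $u \in N^-(v)$. Using Lemma~\ref{lem:messynoarc} to bound $|M[N^-(v)]| \geq \binom{d^-(v)}{2} - O(k)$ and $\sum_{u \in N^-(v)} \xi_{uv} \leq 2\xi_v$, this produces a quantitative relation between $n$, $d^-(v)$, $t(v)$, the localized average $\tfrac{1}{d^-(v)} \sum_{u \in N^-(v)} p^+(u)$, and error terms of order $O(\varepsilon k)$.

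The crucial and most delicate step is to convert this relation into the advertised bound $n \leq \tfrac{3}{2}k + \tfrac{\hat p}{2} + \varepsilon k$, or else to conclude $n \leq 1.59k$. I would split based on whether the localized $p^+$-average is comparable to the global $\hat p = P/(n-k)$: when it is, rearrangement combined with $(\ast)$ gives the main inequality; when it is not, a side argument using~(\ref{eq:count}) together with further bookkeeping produces the alternate $1.59k$ branch. Controlling the relationship between the localized average and the global $\hat p$ is the main technical obstacle, and is the reason the lemma's statement features a maximum.
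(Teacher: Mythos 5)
Your proposal diverges from the paper's argument and, more importantly, has a genuine gap at exactly the step you flag as ``the most delicate.'' Your plan is to refine the triangle inequality of Lemma~\ref{lem:messytriangles} by charging parallel out-pairs of $u$ against $u$'s out-arc budget, which (after averaging over $u\in N^-(v)$) produces a bound involving the \emph{localized} average $\frac{1}{d^-(v)}\sum_{u\in N^-(v)}p^+(u)$. But this quantity has no a priori relation to $\hat p = P/(n-k)$: the parallel out-pairs could be concentrated on a set of $O(P/k)$ vertices that a particular $N^-(v)$ largely misses, in which case your localized average is $o(\hat p)$ and your inequality degenerates to Lemma~\ref{lem:messytriangles} itself, i.e.\ $n\le \frac43 k+\bar t$, which does not imply the claimed bound. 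Your proposed escape --- ``a side argument using (\ref{eq:count})\ produces the alternate $1.59k$ branch'' --- is asserted without any mechanism; there is no reason the failure of localization should force $n\le 1.59k$. In the paper the $1.59k$ threshold arises for an entirely different reason (it is the point above which $q^-(v)-s(v)$ is guaranteed linear in $k$, which is needed to rule out a degenerate case). Note also that even in the favorable case your machinery yields a bound of the shape $n\le \frac43k+\bar t-c\rho$, not the shape $n\le\frac32 k+\frac{\hat p}{2}$; the final optimization against (\ref{eq:count}) depends on getting the latter form, in which $\hat p$ appears with a \emph{positive} coefficient against a weaker leading constant $\frac32 k$.

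The paper's route is quite different and supplies the ingredients you are missing. One restricts to \emph{light} vertices ($\max_c m^-_c(v)\le n/2$), of which there are at least $n-k$; $\hat p$ then enters as the average of the parallel \emph{in}-degree $p^-(v)$ over this set via (\ref{eq:onewayin}), giving a light vertex $v$ with $q^-(v)-s(v)\ge n-k-\hat p-o(k)$, where $q^-(v)$ counts \emph{solo} in-arcs. One then splits on whether some single color carries more than $\varepsilon k$ of these solo in-arcs. If yes, lightness guarantees at least $n/2$ vertices $y$ with $(y,v,c)\notin M$, and a triangle count over $X_c$ gives $k\ge \frac n2+\frac{n-k-\hat p}{2}-o(k)$, which is the $\frac32k+\frac{\hat p}{2}$ branch. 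If no, a convexity count of the triples on $v$ and two solo in-neighbors forces $q^-(v)-s(v)=O(1/\varepsilon)$, contradicting the linear lower bound that holds whenever $n\ge 1.59k$. Without the lightness device and the solo/color case analysis, I do not see how to close your argument.
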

\begin{proof}
We say that a vertex $v$ is light if $\max_{c \in C} m^-_c(v) \leq n/2$. Note that since each color contributes at most one vertex which is not light, there are at least $n-k$ light vertices.  Let $L$ 
be the collection of light vertices.  By (\ref{eq:onewayin}), we have
\[ \E_{v \in L} \left[ q^-(v) - s(v)  - \frac{6 \xi_v}{ \varepsilon k} \right]  \ge n - k - \hat{p} - O( 1/ \varepsilon). \]
Let $v$ be a vertex of $L$ that maximizes $  q^-(v) - s(v) - \frac{6 \xi_v}{ \varepsilon k} $. 

Observe that, appealing to (\ref{eq:count}),  we have
\[ \hat{p} \leq \frac{ nk - n^2/2 +O(k)}{n-k}.   \]
Assuming that $n \geq 1.59k$ and noting that the above bound is decreasing in $n$ (and consequently maximized when $n = 1.59k$), we have
\begin{equation}
\label{eq:qlin}
q^-(v) - s(v) - \frac{6 \xi_v}{ \varepsilon k} \geq .59k - \frac{1.59k^2-(1.59k)^2/2}{.59k} - O(1/\varepsilon) \geq .0375k - O( 1/ \varepsilon ) 
\geq 2\varepsilon k
\end{equation}
and so $ q^-(v) - s(v)$ is linear in size.

Let $B$ be the set of vertices $u$ such that $ (u,v)$ is a solo arc in $M$. Note that $\abs{B} = q^-(v)$.
We now consider cases based on the colors on the solo arcs directed from $B$ into $v$.  For each color $c$ let $B_c$ be the set of vertices $x \in B$ such
that $xv$ is colored $c$.

\textbf{Case 1.} {\em Some $c$ has $ \varepsilon k < \abs{B_{c}}$.}

Let $ w \in B_c $ such that $ \xi_{wv} \le 2 \xi_{v}/( \varepsilon k) $. Consider the triples of the form $wvz$ where $z \in B \setminus B_{c}$. As $w$ and $z$ point to $v$ with solo arcs of different colors, such a triple must be covered by stray triples or a triple in a partial Steiner triple system. Thus,
\[ s(v) + 2 \xi_{v}/( \varepsilon k)  \ge  s(v) + \xi_{wv} \geq q^-(v) -  \abs{B_{c}}. \] 
(Note that if $B = B_c$, then the above inequality just states $s(v) + 2\xi_{v}/(\varepsilon k) \geq 0$.) Therefore,
\[\abs{B_{c}}  \geq q^{-}(v) - s(v) - 2\xi_{v}/(\varepsilon k)  \geq  n-k - \hat{p} +4\xi_{v}/(\varepsilon k)   - O( 1/\varepsilon).\]
Now note Lemma~\ref{lem:messynoarc} implies that $\abs{\{\{u,z\} \in \binom{B_{c}}{2}: (u,z) \text{ or } (z,u) \in M\}} \geq \binom{\abs{B_c}}{2} - O(k)$ so we have
\[ \E_{u \in B_{c}} \left[  \abs{ \left\{ z \in B_{c} : (u,z) \in M \right\} } - \xi_{uv} \right] \ge \frac{|  B_{c} | }{2} - O(1/\varepsilon) - \frac{ 2 \xi_v}{\varepsilon k }. \]
Consider a vertex $ u \in B_c$ that maximizes 
$ \abs{\left\{ z \in B_c : (u,z) \in M \right\} } - \xi_{uv}$. 
Let $C$ be the set of vertices $y$ such that $ (y,v,c)\not\in M$. Note that for every vertex $y \in C$, the triple $uvy$ appears in some $\HH_{c'}$ in one of the following ways: $uvy$ is contained in the partial Steiner triple system on $X_{c'} \cup Y_{c'}$; $u,v \in A_{y,c'}$ so that $(u,y,c'), (v,y,c') \in M$; or $uvy$ is one of the stray triples. Therefore,
\[s(u) + m^+(u) - \abs{\left\{ z \in B_c : (u,z) \in M \right\}} + \xi_{uv} \geq |C|.\]
Since $k \geq s(u) + m^+(u)$ from (\ref{eq:zero}), we have
\begin{align*}
k &\geq |C| +  \abs{\left\{ z \in B_c : (u,z) \in M \right\} } -\xi_{uv} \geq 
 |C| +   \frac{|  B_c | }{2} - O(1/\varepsilon) - \frac{ 2 \xi_v}{\varepsilon k } \\
 &\geq   \frac{n}{2} + \frac{ n - k - \hat{p} }{2}  - O(1/\varepsilon),
 \end{align*}
where the bound $|C| \ge n/2$ follows from the fact that $v$ is a light vertex.
Rearranging and letting $k$ be sufficiently large, we have
\[n \leq \frac{3k}{2} +  \frac{ \hat{p}}{2}+ \varepsilon k, \]
as desired.

\textbf{Case 2.} {\em  Every color appears on at most $ \varepsilon k$ solo arcs from $B$ to $v$.}

Consider the triples containing $v$ and two vertices in $B$. The solo arcs partition $B$ so that the $\ell$ colors on these arcs color at most $\sum_{i=1}^\ell \binom{x_i}{2}$ of these triples, where $x_i$ is the number of vertices with solo arcs in the $i^{\text{th}}$ color. 
Noting that $x_i \leq \varepsilon k $ and $\sum_{i=1}^\ell x_i = q^-(v)$, we have that by convexity,
\[\sum_{i=1}^\ell \binom{x_i}{2} \leq \sum_{i=1}^\ell \frac{x_i^2}{2} \leq \frac{q^-(v)}{\varepsilon k}\cdot \frac{(\varepsilon k)^2}{2} \leq  \frac{q^-(v)\varepsilon k}{2}\]
Furthermore, the number of triples in the partial Steiner triple systems that contain $v$ and two vertices from $B$ is at most $s(v)\frac{q^-(v)}{2}$, since each partial Steiner triple system contributes a matching in the trace of $v$. The only other triples containing $v$ and two vertices in $B$ are the $\xi_v$ stray triples containing $v$. Then, we have 
\[\binom{q^-(v)}{2} \leq \frac{q^-(v)\varepsilon k}{2} + \frac{s(v)q^-(v)}{2} + \xi_v\]
so $q^-(v)  - s(v) \leq \varepsilon k + 1 + \frac{ 2 \xi_v}{q^-(v)} $ and
\[q^-(v)  - s(v) - \frac{ 6 \xi_v}{ \varepsilon k } 
\leq \varepsilon k + 1.  \]
This contradicts (\ref{eq:qlin}) when $k$ is sufficiently large.
\end{proof}

We are now ready to complete the proof.  Lemmas~\ref{lem:messytriangles} and \ref{lem:solos} imply that if either $ \hat{p}$ or $ \bar{t}$ is sufficiently small then the desired upper bound on $n$ follows. 
Observe that (\ref{eq:count}) implies
\begin{equation}
    \label{eq:stepper}
\hat{p} (n-k) + \frac{n \bar{t}}{2} \le nk - \frac{n^2}{2} + O(k).
\end{equation}
Given this linear relationship, we see that $n$ is at most the bound determined by setting the expressions in the two Lemmas equal. So, we set
\[\frac{4k}{3} + \bar{t} + \varepsilon k = \frac{3k}{2} + \frac{\hat{p}}{2} + \varepsilon k \quad \Rightarrow \quad \bar{t} = \frac{k}{6} + \frac{\hat{p}}{2}\]
Applying this to (\ref{eq:stepper}), we have
\[\hat{p}(n-k) + \frac{n(\frac{k}{6} + \frac{\hat{p}}{2})}{2} \leq nk - \frac{n^2}{2} + O(k) \quad \Rightarrow\quad \hat{p}\left(\frac{5n}{4}-k\right) \leq \frac{11}{12}nk - \frac{n^2}{2} + O(k)\]
Therefore, $n$ is at most the maximum of $1.59k$ and
\begin{equation}
    \label{step2}
\frac{3k}{2} + \frac{11nk-6n^2+ O(k)}{2(15n-12k)} + \varepsilon k
\le 
\frac{3k}{2} + \frac{11nk-6n^2}{2(15n-12k)} + 2\varepsilon k.
\end{equation}
Note that the right hand side of (\ref{step2}) is decreasing in $n$.
So, assuming $ n \ge 1.59k$, we conclude $ n \le \eta k + 2\varepsilon k$, where $\eta$ is a root of the equation
\[(2\eta-3)(15\eta-12) = 11\eta - 6\eta^2 \]
in the interval $[1,3]$. As this quadratic equation simplifies to
$9\eta^2 - 20 \eta + 9 = 0$, we have
$$n \leq \left(\frac{10+\sqrt{19}}{9}+2\varepsilon\right)k$$ for sufficiently large $k$, as desired.


\section{Extremal number of the messy path}
We define $S^{(3)}_{n-1}$ to be the complete 3-uniform star on $n$ vertices; that is,
$S^{(3)}_{n-1}$ is the collection of all triples on $n$ vertices that contain some fixed vertex.
F\"uredi and \"Ozkahya proved that for sufficiently large $n$, $ex(n,\M) = \binom{n-1}{2}$ and $ S^{(3)}_{n-1}$ is the unique extremal hypergraph \cite{furedi2011}. Here we observe that this result extends to smaller $n$.

Of course, the star is not the unique extremal hypergraph in $ Ex(n, \M)$ when $n$ is too small. For example, the complete 3-uniform hypergraph on $n$ vertices does not contain $ \M$ when $ n \le 5$. Furthermore, if $X$ is a vertex set of size $n=6$ then any collection $ {\mathcal F}$ of 3-sets with the property that $ e \in {\mathcal F} $ implies $ X \setminus e \not\in {\mathcal F}$ does not contain a copy of the messy path.  The collection of all such hypergraphs is given explicitly in \cite{pol2017int}. Our main result here is that the star is the unique extremal hypergraph in $ Ex(n, \M)$ if $n \ge 7$.

In the proof we make use of hypergraph $ F(a,2)$ which was introduced above. Recall that $F(a,2)$ refers to the 3-uniform hypergraph with vertex set $\{x_1,\dots,x_a,y_1,y_2\}$ and edge set $\{x_iy_1y_2:i\in[a]\}$. We refer to the vertices $x_1,\dots,x_a$ as petals of the $F(a,2)$ and $y_1, y_2$ as the center.

\begin{remark}
A family $\mathcal{F} \subset \binom{X}{3}$ that does not contain a messy path has the property that if $A,B,C \in \mathcal{F}$ are distinct sets such that  $\abs{A \cup B \cup C} =6 $ then $A \cap B \neq \varnothing, A \cap C \neq \varnothing, B \cap C \neq \varnothing$. It is known that for $n$ sufficiently large, the size of such a family is at most $\binom{n-1}{2}$, with the unique extremal family being a star \cite{furedi2011}. This extremal result is also known for other similar conditions on families of sets generalizing the Erd\H{o}s-Ko-Rado Theorem \cite{erdos1961, frankl1983, mubayi2006, mubayi2005}
\end{remark}

\begin{theorem}
\[ex(n, \M ) = \begin{cases}\binom{n}{3}&\text{if }n\leq 5\\\binom{n-1}{2}&\text{if }n \geq 6.\end{cases}\]
Furthermore, if $n \ge 7$ then $S^{(3)}_{n-1}$ is the unique hypergraph in
$Ex(n, \M)$. 
\end{theorem}
\begin{proof}
For $n \leq 5$, since a messy path has 6 vertices, $\binom{[n]}{3}$ is the unique extremal hypergraph. For $n=6$, note that if there is a pair of disjoint triples, then any other edge would create a messy path. So we may assume the hypergraph is intersecting, and Proposition 1.6 in \cite{pol2017int} gives the desired result.

For $n \geq 7$, we proceed by induction. For the base case of $n = 7$, it suffices to show that the family is intersecting---then the result follows by Erd\H{o}s-Ko-Rado bound. So, suppose there are two disjoint triples $e,f$. Note that any other triple contained in their union would create a messy path. Thus, all other triples must contain the last vertex. Note that if this vertex is incident with an triple intersecting $e$ in two vertices and another triple intersecting $f$ in two vertices, then this would also create a messy path. Thus, there are no triples contained among this vertex and for instance, $f$. This implies that there are at most $\binom{6}{2}-3+2$ triples. Thus, the star is the unique extremal family by Erd\H{o}s-Ko-Rado.

For $n \geq 8$, we assume that we have messy path-free family $\HH$ on vertex set $V$ with at least $\binom{n-1}{2}$ triples. Note that if some vertex has degree at most $n-2 = \binom{n-2}{1}$, then the remaining $n-1$ vertices are messy path-free with at least $\binom{n-2}{2}$ triples, so by the inductive hypothesis, this is exactly $\binom{n-2}{2}$ triples in a complete star. Note that if any triple from the removed vertex does not contain the star center then we have a messy path. Thus, the extremal family would be a complete star having exactly $\binom{n-1}{2}$ triples.

So, it suffices to show that $\HH$ has a vertex of degree at most $n-2$. Assume for the sake of contradiction that no such vertex exists.

Note that the average codegree in $ \HH$ satisfies
\[ \frac{ 3 \abs{\HH} }{ \binom{n}{2}} \ge \frac{ 3\binom{n-1}{2}}{ \binom{n}{2}} > 2. \]
So, we may assume $\HH$ has a pair of vertices $u,v$ with co-degree at least 3.

We first consider the case where there exists two vertices $u,v$ of codegree at least 4. Consider any two petals $x,y$ of the resulting $F(a,2)$ with center $\{u,v\}$. Recall that, by assumption, both $x$ and $y$ have degree at least $n-1$. Note that every triple through $x$ or $y$ must intersect $u$ or $v$ or we have a messy path. Thus, each of $x,y$ has at least $n-2$ triples intersecting exactly one of $u$ or $v$, of which at least $n-4$ do not contain both $x,y$. Since $n-4 \geq 3$, we note that $x$ must have at least 2 triples intersecting say, without loss of generality, $u$ which do not contain $v$. Let these triples be $uxa, uxb$. Then, if $y$ is in a triple $vyc$ for $c$ not $u,x$, taking $vyc$, one of $uxa$ and $uxb$ (as to not include $c$), and $uvy$, we would have a messy path. Thus, all triples through $y$ except possibly $vyx$ must intersect $u$. Let another petal of this $F(a,2)$ be $z$. Repeat this argument with $x,z$ to get that $z$ has at least 2 triples through $u$ but not $v,x,y$. Finally, repeating this again with $z,y$ to get that all triples through $y$ must intersect $u$ except possibly $vyz$, we conclude that all triples through $y$ must contain $u$, so $y$ has degree at most $n-2$, a contradiction.

Otherwise all pairs of vertices have codegree at most 3. Now consider any petal $x$ of the $F(3,2)$ with center $\{u,v\}$ and recall that it has degree at least $n-1$. Note that at most one triple through $x$ can avoid $u,v$ in this case. Thus, $x$ has at least $n-3$ triples intersecting exactly one of $u$ or $v$. Then, we have that $\max(deg(x,u), deg(x,v)) \geq 1 + \frac{n-3}{2}$ where the 1 counts $uvx$. Then, $\frac{n-1}{2} \geq \frac{7}{2} > 3$, a contradiction.

Thus, there must exist a vertex of degree at most $n-2$ and by induction, the proof is complete.
\end{proof}


\section{Conclusion}

We emphasize that we spent little effort optimizing for the second order terms (i.e., $\varepsilon$) in Theorems 2 and 3. One reason for this is that we suspect that incremental improvements on the bounds we prove here can be achieved with a bit more effort. In other words, we suspect that our results here do not give the correct asymptotics of these multicolor Ramsey numbers.  The main barrier we see to significant improvement on our upper bounds is the proliferation of 2-cycles in the digraphs introduced in Sections 2.2 and 3.2 that can occur when $n$ is close to $k$.  Indeed, all of the methods that we use in this work are based on pairs of vertices with arcs oriented in only one direction in these digraphs.  It would be interesting to see methods that could handle these 2-cycles and thereby produce upper bounds on $ r_k(\HH)$ that are dramatically closer to $k$.

There are a number of other 3-edge triple systems for which the multicolor Ramsey number is an interesting open question.  These include the loose cycle $ {\mathcal C} = \{ abc , cde, afe \}$ and the hypergraph $ {\mathcal F}_5 = \{ abc, abd, cde \}$. The best known bounds for these multicolor Ramsey numbers are as follows. For the loose cycle, 
\[k+5 \leq r_k(\mathcal{C}) \leq 3k\text{ for }k \geq 3,\]
analogous to the simple bounds for $\mathcal{L}$ and $\mathcal{M}$ \cite{gyarfas2012loose}. For $\mathcal{F}_5$, 
\[2^{ck} \leq r_k(F_5) \leq k!\text{ for }k \geq 4\text{ and }c\text{ some positive constant,}\]
which resembles bounds for $K^{(3)}_4-e$ and $K_3$ \cite{axen2014}. The hypergraph $ {\mathcal G} = \{ abc, abd, bef \}$, which we dub the giraffe, was addressed in the masters thesis of the second author, who showed 
\[k+1 \leq r_k(\mathcal{K}) \leq r_k(\mathcal{G}) \leq k+4,\]
where the kite $\mathcal{K} = \{abc, abd\}$ has $k+1 \leq r_k(\mathcal{K}) \leq k+3$ \cite{axen2014}.

\bibliographystyle{plain}
\bibliography{pathsbib}
\end{document}